\numberwithin{equation}{section}
\def\vect#1{\mbox{\boldmath $#1$}}
\def\s{\sigma}
\def\e{\varepsilon}
\def\a{\alpha}
\def\vp{\varphi}
\def\l{\lambda}
\def\D{\mathscr{D}}%\def\D{\slashed{\mathcal{D}}}
\def\V{\mathcal{V}}
\def\S{\mathcal{S}}
\def\H{\mathfrak{H}}
\def\K{\mathscr{K}}
\def\Cyl{\mathcal{C}}
\def\F{\mathscr{F}}
\def\chrt{\mathbbm{1}}
\def\C{\mathbb{C}}
\def\R{\mathbb{R}}
\def\N{\mathbb{N}}
\def\wh{\widehat}
\def\o{\overline}
\def\til{~}
\newcommand{\n}[1]{\left\lVert#1\right\rVert}
\DeclareMathOperator*{\sgn}{sgn}
\DeclareMathOperator*{\supp}{supp}
\DeclareMathOperator*{\dom}{dom}
\DeclareMathOperator*{\ran}{ran}
\newtheorem{theorem}{Theorem}
\newtheorem{remark}{Remark}[section]
\newtheorem{lemma}{Lemma}
\newtheorem*{hypothesis}{Hypothesis}
\newtheorem*{notations}{Notations}
\newlength{\longtext}
\title{Eigenvalue bounds for non-selfadjoint Dirac operators}
\author{
Piero D'Ancona,
\quad
Luca Fanelli,
\quad
Nico Michele Schiavone
\thanks{\hrule \vspace{5pt}
	Department of Mathematics ``Guido Castelnuovo'', University of Rome ``La Sapienza'',
	Piazzale Aldo Moro 5, 00185 Rome, Italy.
	\newline
	e-mails: dancona@mat.uniroma1.it, fanelli@mat.uniroma1.it, schiavone@mat.uniroma1.it}
}
\date{
	\footnotesize
	\begin{tabularx}{0.7\textwidth}{lX}
		\begin{tabular}{@{}l@{}}
			{\bf Keywords:}
			\\
			\quad
		\end{tabular}
		& 
		\begin{tabular}{@{}l@{}}
			{Dirac operator,} 
			{non-selfadjoint perturbation,}
			\\
			{localization of eigenvalues,}
			{Birman–Schwinger principle} 
		\end{tabular}
		\\[0.35cm]
		{\bf MSC2020:}
		&
		primary 35P15, 35J99, 47A10, 47F05, 81Q12
	\end{tabularx}
}
\renewcommand\footnotemark{}
\begin{document}
\maketitle

%%%%%

\begin{abstract}
	In this work we prove that the eigenvalues of the $n$-dimensional massive Dirac operator $\D_0 + V$, $n\ge2$, perturbed by a possibly non-Hermitian potential $V$, are localized in the union of two disjoint disks of the complex plane, provided that $V$ is sufficiently small with respect to the mixed norms $L^1_{x_j} L^\infty_{\widehat{x}_j}$, for $j\in\{1,\dots,n\}$. In the massless case, we prove instead that the discrete spectrum is empty under the same smallness assumption on $V$, and in particular the spectrum is the same of the unperturbed operator, namely $\s(\D_0+V)=\s(\D_0)=\R$.
	
	The main tools we employ are an abstract version of the Birman-Schwinger principle, which include also the study of embedded eigenvalues, and suitable resolvent estimates for the Schr\"odinger operator.
\end{abstract}

%%%%%%%%%%%%%%%%%%%%%%%%%%%%%%%%%%%%%%%%%%%%%%%%%%
%%%%%%%%%%%%%%%%%%%%% SECTION1 %%%%%%%%%%%%%%%%%%%%%%%
%%%%%%%%%%%%%%%%%%%%%%%%%%%%%%%%%%%%%%%%%%%%%%%%%%

\section{Introduction}
In recent years, non-selfadjoint operators are attracting increasing attention, not only in view of applications in quantum mechanics and other branches of physics, but also for the interesting mathematical challenges they present. 
While the theory of selfadjoint operators is consolidated in a vast literature, references on the study of non-selfadjoint operators are more sparse, so much that, quoting E.B. Davies \cite{Dav07}, \textit{\lq\lq it can hardly be called a theory''}. We refer to the books \cite{BGSZ15, Tre08} for milestones on the history of the argument, and to \cite{Dav02} for some physical applications.

%When we approach the non-selfadjoint operators, most of the main tools of the selfadjoint, such as the spectral theorem and the variational methods no longer apply. Thus, we are forced to research other techniques, as the Birman-Schwinger principle, which in this work will be our principal tools together with a suitable resolvent estimate.

In this paper, we deal with the free Dirac operator $\D_0$ perturbed by a potential, formally defined by
\begin{equation*}\label{DV}
	\D_{V} = \D_{0} + V.
\end{equation*}
The relevance of these kind of operators in quantum physics is common knowledge, since in the $2$-dimensional case the operator $\D_{V}$ is related to the quantum theory of graphene, while in the $3$-dimensional case the Hamiltonian $\D_{V}$ determines the dynamic of a relativistic quantum particle of spin $\frac{1}{2}$, subject to an external electric field described by the potential $V$.

We consider the operator $\D_{V}$ acting on the Hilbert space of spinors $\H = L^2(\R^n; \C^{N} )$, where $n\ge2$, $N := 2^{\lceil n/2 \rceil}$ and ${\lceil \cdot \rceil}$ is the ceiling function. 
The free Dirac operator $\D_{0}$ with non-negative mass $m$ is defined as
\begin{equation*}
	\D_{0} = -i c \hbar \, \vect{\a} \cdot \nabla + m c^2 \a_0
		 = -i c \hbar \sum_{k=1}^{n} \a_k \frac{\partial}{\partial x_k} + m c^2 \a_0,
	%\v{\a}=(\a_1,\a_2,\a_3),
	%	\quad
\end{equation*}
where $c$ is the speed of light, $\hbar$ is the reduced Planck constant and the matrices $\a_k \in \C^{N \times N}$, for $k \in \{0,\dots,n\}$, are elements of the Clifford algebra (see \cite{Obo99}) satisfying the anti-commutation relations 
\begin{equation}\label{clifford}
	\a_j \a_k + \a_k \a_j = 2\delta_{j,k} I_{\C^N}, \quad\text{for $j,k \in \{0,\dots,n\}$,}
\end{equation}
where $\delta_{j,k}$ is the Kronecker symbol. Without loss of generality we can take
\begin{equation*}
\a_0 =
\begin{pmatrix}
I_{\C^{N/2 \times N/2}} & \vect{0}\\
\vect{0} & -I_{\C^{N/2 \times N/2}}
\end{pmatrix}
\end{equation*}
and renormalize the unit measures such that $c=\hbar=1$.
The free Dirac operator has domain
\begin{equation*}
	\dom(\D_0) = \{ \psi \in \H \colon \nabla\psi \in \H^n \}
\end{equation*}
and it is selfadjoint with core $C_0^\infty(\R^n; \C^N)$.
%
\begin{comment}
In particular, for the $3$-dimensional case, they are defined as

\begin{equation*}
\a_0 =
\begin{pmatrix}
I_2 & \v{0}\\
\v{0} & -I_2
\end{pmatrix},
\quad
\a_i=
\begin{pmatrix}
\v{0} & \s_i \\
\s_i & \v{0}
\end{pmatrix},
\end{equation*}
where $\s_i$, for $i=1,2,3$, are the Pauli matrices
\begin{equation*}
\s_1=
\begin{pmatrix}
0 & 1 \\
1 & 0
\end{pmatrix},
\quad
\s_2 =
\begin{pmatrix}
0 & -i \\
i & 0
\end{pmatrix},
\quad
\s_3 =
\begin{pmatrix}
1 & 0 \\
0 & -1
\end{pmatrix}.
\end{equation*}
\end{comment}

The potential $V \colon \R^n \to \C^{N \times N}$ is allowed to be any non-Hermitian matrix-valued function with $|V|\in L_{\text{loc}}^2(\R^n;\R)$, where $|V|$ is the operator norm of $V$.
Thus the resulting operator may be non-selfadjoint.
With an abuse of notation, we use the same symbol $V$ to indicate the multiplication operator generated by the matrix $V$ in $\H$ with initial domain $\dom(V)=C_0^\infty(\R^n;\C^N)$.

In this work, we are interested in location of eigenvalues for $\D_V$ in the complex plane. For the non-selfadjoint Schr\"odinger operator $-\Delta+V$, we refer to the works by Frank \cite{Fra11,Fra18}, Frank and Sabin \cite{FSab17}, Frank and Simon \cite{FSim15}. In particular, we have that the eigenvalues of $-\Delta+V$ satisfy the bound
\begin{equation*}
	|z|^{\gamma} \le D_{\gamma,n} \int_{\R_n} |V(x)|^{\gamma+n/2}dx,
	\qquad
	0 < \gamma
	\begin{cases}
	=\frac{1}{2}, &\text{if $n=1$,}
	\\
	\le \frac{1}{2}, &\text{if $n\ge2$,}
	\end{cases}
\end{equation*}
where the constant $D_{\gamma,n}>0$ is independent of $z$ and $V$.
This localization estimate was proven by Abramov, Aslanyan and Davies \cite{AAD01} for $n=1$ with the sharp constant $D_{1/2,1}=1/2$, and for larger $n$ by Frank \cite{Fra11}, combining the Birman-Schwinger principle with the  uniform Sobolev inequalities by Kenig, Ruiz and Sogge \cite{KRS87}, i.e.
\begin{equation*}
	\n{(-\Delta-z)^{-1}}_{L^p \to L^{p'}}
	\le C
	|z|^{-n/2+n/p-1},
	\qquad
	\frac{2}{n+1} \le \frac{1}{p}-\frac{1}{p'} \le \frac{2}{n},
	%\frac{2n}{n+2} < p \le \frac{2(n+1)}{n+3}.	
\end{equation*}
where $p'= p/(p-1)$ is the dual exponent of $p$.
In \cite{LS09,Saf10}, Laptev and Safronov conjectured that the range of $\gamma$ for $n\ge2$ can be extended to $0<\gamma<n/2$, and Frank and Simon \cite{FSim15} proved the conjecture to be true for radial symmetric potentials.

Let us return to the Dirac operator $\D_{V}$. If we suppose $V\colon \R^n\to\C^{N \times N}$ is an Hermitian matrix-valued function, such that the operator $\D_{V}$ is selfadjoint, we have an extensive literature on its spectral properties, see for example the monograph by Thaller \cite{Tha13}. In the non-selfadjoint case, the study of the spectrum of $\D_{V}$ was initiated by Cuenin, Laptev and Tretter in \cite{CLT14} in the $1$-dimensional case, followed by 
\cite{Cue14,CS18,Enb18}.
%Cuenin \cite{Cue14}, Cuenin and Siegl \cite{CS18} and Enblom \cite{Enb18}. 
For the higher dimensional case, we refer to the works 
\cite{Cue17,Dub14,FK19,Sam16}.
%by Cuenin \cite{Cue17}, Dubuisson \cite{Dub14}, Fanelli and Krej\v ci\v r\' ik \cite{FK19} and Sambou \cite{Sam16}.

In \cite{CLT14}, the authors proved that, for $n=1$, if $V=(V_{ij})_{i,j \in \{1,2\}}$ with $V_{ij} \in L^1(\R)$ and 
\begin{equation*}
	\n{V}_{L^1(\R)} = \int_{\R} |V(x)|dx \le 1, 	
\end{equation*}
%where $|V(x)|$ is the operator norm of $V(x)$,
then every non-embedded eigenvalue $z\in \rho(\D_{0})$ of $\D_{V}$ lies in the disjoint union
\begin{equation*}
	z \in \overline{B}_{R_0}(x^-_0) \cup \overline{B}_{R_0}(x^+_0)
\end{equation*}
of the two closed disks in the complex plane with centers and radius respectively
\begin{equation*}
	x^\pm_0 = \pm \sqrt{\frac{\n{V}_1^4-2\n{V}_1^2+2}{4(1-\n{V}_1^2)} + \frac{1}{2}},
	\qquad
	R_0 =
	\sqrt{\frac{\n{V}_1^4-2\n{V}_1^2+2}{4(1-\n{V}_1^2)} - \frac{1}{2}}.
\end{equation*}
In particular, in the massless case ($m=0$), the spectrum of $\D_{V}$ is $\R$. They also showed that this inclusion is sharp.
Their proof is essentially based on the combination of the Birman-Schwinger principle with the resolvent estimate for the free Dirac operator
\begin{equation*}
	\n{(\D_{0}-z)^{-1}}_{L^\infty(\R) \to L^1(\R)}
	\le
	\sqrt{\frac{1}{2}
		+\frac{1}{4} \left\lvert\frac{z+m}{z-m}\right\rvert
		+\frac{1}{4} \left\lvert\frac{z-m}{z+m}\right\rvert
	},
	\quad
	\text{$z \in \rho(\D_{0})$}.
\end{equation*}
It should be remarked that, in higher dimensions $n\ge2$, $L^p(\R^n) \to L^{p'}(\R^n)$ estimates for $(\D_0-z)^{-1}$ do not exist, as observed in the Introduction of \cite{Cue14}. Indeed, Cuenin points out that, due to the Stein-Thomas restriction theorem and standard estimates for Bessel potentials, the resolvent $(\D_{0}-z)^{-1} \colon L^p(\R^n) \to L^{p'}(\R^n)$ is bounded uniformly in $|z|>1$ if and only if
\begin{equation*}
	\frac{2}{n+1}
	\le
	\frac{1}{p} + \frac{1}{p'}
	\le
	\frac{1}{n},
\end{equation*}
and thus we are forced to choose $n=1$. The situation for the Schr\"odinger operator is better since the right-hand side of the above range is $2/n$, as stated in the Kenig-Ruiz-Sogge estimates.

In \cite{Cue17}, Cuenin localized the eigenvalues of the perturbed Dirac operator in terms of the $L^p$-norm of the potential $V$, but in an unbounded region of the complex plane. Indeed, Theorem\til6.1.b of \cite{Cue17} states that, if $n\ge2$ and $|V|\in L^p$, with $p\ge n$, then any eigenvalue $z \in \rho(\D_0)$ of $\D_V$ satisfies
\begin{equation*}
	\left\lvert {\Im z}/{\Re z} \right\rvert^{(n-1)/p}
	|\Im z|^{1-n/p} 
	\le C 
	\n{V}_{L^p(\R^n)},
\end{equation*}
where $C$ is a constant independent on $z$ and $V$.
A similar result was proved by Fanelli and Krej\v ci\v r\'ik in \cite{FK19}, where they show that, for dimension $n=3$, $|V|\in L^3(\R^3)$ and $z \in \rho(\D_0) \cap \sigma_p(\R^n)$, then
\begin{equation*}
	\left( 1 + \frac{(\Re z)^2}{(\Re\sqrt{m^2-z^2})^2} \right)^{-1/2}
	<
	\left( {\pi}/{2} \right)^{1/3}
	\sqrt{1+e^{-1}+2e^{-2}} \n{V}_{L^3(\R^3)}.
\end{equation*} 
The advantage of the last result lies in the explicit condition which is easy to check in applications. However, still the eigenvalues are localized in an unbounded region around the continuous spectrum $\sigma(\D_0)=(-\infty,-m] \cup [m,+\infty)$ of the free Dirac operator $\D_0$.

Here, our main results try to generalize the one by Cuenin, Laptev and Tretter \cite{CLT14} in higher dimensions, recovering the enclosure of the eigenvalues of the massive ($m>0$) Dirac operator $\D_V$ in a compact region of the complex plane, imposing the smallness of the potential $V$ with respect to suitable mixed Lebesgue norms. In the case of the massless ($m=0$) Dirac operator, we obtain that the point spectrum of the perturbed operator $\D_{V}$ is empty, and then $\sigma(\D_{V})=\sigma(\D_0)=\R$. We also mention the recent paper \cite{CFK}, in which similar results are obtained by multiplication techniques.

Before to formalize our results in Theorems\til\ref{thm1}\til\&\til\ref{thm2}, we need to introduce the following notations used throughout the paper.

\begin{notations}\normalfont

We use the symbols $\sigma(H)$, $\sigma_p(H)$, $\sigma_e(H)$ and $\rho(H)$ respectively for the spectrum, the point spectrum, the essential spectrum and the resolvent of an operator $H$. Since for a non-selfadjoint closed operator there are various definitions for essential spectrum, we define
\begin{align*}
\sigma_e(H) =
\{z \in \C \colon \text{ $H-z$ is not a Fredholm operator} \},
\end{align*}
whereas the discrete spectrum is defined as
\begin{equation*}
\sigma_d(H) =
\{z \in \C \colon \text{$z$ is an isolated eigenvalue of $H$ of finite multiplicity}\}.
\end{equation*}
For $z\in \rho(H)$, we denote with $R_H(z) := (H-z)^{-1}$ the resolvent operator of $H$.
We recall also that 
\begin{gather*}
	\sigma(-\Delta)
	=\sigma_e(-\Delta)
	=[0,+\infty),
	\\
	\sigma(\D_0)
	=\sigma_e(\D_0)
	=(-\infty,-m]\cup[m,+\infty) . %\quad\text{for $m\ge0$.}
\end{gather*}

We use the symbol $(\cdot,\cdot)_{\H}$ for the inner product on the Hilbert space $\H=L^2(\R^n;\C^N)$, that is
\begin{equation*}
(\phi,\psi)_\H = \int_{\R^n} \phi^{\dagger} \cdot \psi \, dx
\end{equation*}
where $\cdot$ is the scalar product.

Fixed $j\in\{1,\dots,n\}$ and $x=(x_1,\dots,x_n)\in\R^n$, we denote
\begin{align*}
{\widehat{x}_j} &:= (x_1,\dots,x_{j-1},x_{j+1},\dots,x_n) \in \R^{n-1},
\\
(\o{x},{\widehat{x}_j}) &:=  (x_1,\dots,x_{j-1},\o{x},x_{j+1},\dots,x_n) \in\R^n.
\end{align*}
Define the mixed Lebesgue spaces $L^p_{x_j}L^q_{{\widehat{x}_j}}(\R^n)$ as the spaces of the functions with finite mixed-norm
\begin{equation*}
\n{f}_{L^p_{x_j}L^q_{{\widehat{x}_j}}} := \left( \int_{\R} \left(\int_{\R^{n-1}} |f(x_j,{\widehat{x}_j})|^q d{\widehat{x}_j} \right)^{p/q} dx_j \right)^{1/p},
\end{equation*}
where the obvious modifications occur for $p=\infty$ or $q=\infty$.

For any matrix--valued function $M \colon \R^n \to \C^{N\times N}$, 
%with an abuse of notation we indicate with the same symbol $M \colon \H \to \H$ the multiplication operator generated by $M$. Moreover, 
we set 
\begin{equation*}
\n{M}_{L^p_{x_j} L^q_{{\widehat{x}_j}}} := \n{|M|}_{L^p_{x_j} L^q_{{\widehat{x}_j}}}
\end{equation*}
where the function $|M|\colon \R^n\to\R$ is obtained considering the operator norm $|M(x)|$ of $M(x)$ for almost every fixed $x\in\R^n$. 

We also indicate with
\begin{align*}
[f *_{x_j} g](x) &:= \int_\R f(y_j,{\widehat{x}_j}) g(x_j-y_j,{\widehat{x}_j})dy_j,
\\
[\F_{x_j} f] (\xi_j, {\widehat{x}_j}) &:= \frac{1}{\sqrt{2\pi}} \int_{\R} e^{-i x_j \xi_j} f(x_j,{\widehat{x}_j}) dx_j,
\\
[\F^{-1}_{\xi_j} f] (x_j, {\widehat{x}_j}) &:= \frac{1}{\sqrt{2\pi}} \int_{\R} e^{i x_j \xi_j} f(\xi_j,{\widehat{x}_j}) d\xi_j,
\end{align*}
respectively the partial convolution respect to $x_j$, the partial Fourier transform respect to $x_j$ and its inverse. In a similar way one can define the partial (inverse) Fourier transform respect to ${\widehat{x}_j}$ and the complete (inverse) Fourier transform respect to $x$.

Finally, let us consider the function spaces
\begin{equation*}
	X \equiv X(\R^n) := \bigcap_{j=1}^{n} L^1_{x_j} L^2_{{\widehat{x}_j}}(\R^n),
	\qquad
	Y \equiv Y(\R^n) := \bigcap_{j=1}^{n} L^1_{x_j} L^\infty_{{\widehat{x}_j}}(\R^n),
\end{equation*}
with norms defined as
\begin{equation*}\label{Xnorm}
	\n{f}_X =
	\max_{j\in\{1,\dots,n\}}
	\n{ f }_{L^1_{x_j} L^2_{{\widehat{x}_j}}},
	\qquad
	\n{f}_Y =
	\max_{j\in\{1,\dots,n\}}
	\n{ f }_{L^1_{x_j} L^\infty_{{\widehat{x}_j}}}.
\end{equation*}
The dual space of $X$ is given (see e.g. \cite{BL12}) by
\begin{equation*}
	X^* \equiv X^*(\R^n) := \sum_{j=1}^{n} L^\infty_{x_j} L^2_{{\widehat{x}_j}}(\R^n),
\end{equation*}
with the norm
\begin{equation}\label{normX*}
	\n{f}_{X^*}
	:=
	\inf
	\left\{ \sum_{j=1}^{n} \n{f_j}_{L^\infty_{x_j} L^2_{{\widehat{x}_j}}} 
	\colon
	f=\sum_{j=1}^n f_j
	\right\}.
\end{equation}

\end{notations}

We can announce now our results.

\begin{theorem}
	\label{thm1}
	Let $m>0$.
	There exists a constant $C_0>0$, independent on $V$, such that if
	\begin{equation*}
	\n{V}_Y	< C_0,
	\end{equation*}
	then every eigenvalues $z \in \sigma_p(\D_{V})$ of $\D_{V}$ lies in the union 
	\begin{equation*}
	z \in \overline{B}_{R_0} (x^-_0) \cup \overline{B}_{R_0} (x^+_0)
	\end{equation*}
	of the two closed disks in $\C$ with centers in $x_0^-,x_0^+$ and radius $R_0$, with
	\begin{equation*}
	x^\pm_0 := \pm m\,\frac{\V^2+1}{\V^2-1},
	\quad
	R_0 := m\,\frac{2\V}{\V^2-1},
	\quad
	\V \equiv \V(V) := \left[\frac{(n+1)C_0}{\n{V}_Y} - n \right]^2 > 1.
	\end{equation*}
\end{theorem}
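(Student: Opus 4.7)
The plan is to combine the abstract Birman--Schwinger principle (set up earlier in the paper) with a Dirac--to--Schr\"odinger translation via the identity $\D_0^2 = -\Delta + m^2$. First, I would factorize $V = BA$ using the polar decomposition $V = U|V|$, setting $A := |V|^{1/2}$ and $B := U|V|^{1/2}$, and form the Birman--Schwinger operator $Q(z) := A R_{\D_0}(z) B$. If $z \in \sigma_p(\D_V)$, the abstract principle (valid also for embedded eigenvalues via boundary values of the resolvent) gives $\|Q(z)\|_{\H \to \H} \ge 1$. A direct H\"older computation---H\"older in $\widehat{x}_j$ followed by Cauchy--Schwarz in $x_j$---yields $\|A\|_{X^* \to \H}, \|B\|_{\H \to X} \le \|V\|_Y^{1/2}$, so that
\begin{equation*}
1 \le \|Q(z)\|_{\H \to \H} \le \|V\|_Y \, \|R_{\D_0}(z)\|_{X \to X^*},
\end{equation*}
and the problem reduces to bounding the Dirac resolvent in the $X \to X^*$ duality.

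Next, from $R_{\D_0}(z) = (\D_0 + z) R_{-\Delta}(\zeta)$ with $\zeta := z^2 - m^2$, one has
\begin{equation*}
\|R_{\D_0}(z)\|_{X \to X^*} \le \sum_{k=1}^{n}\|\partial_k R_{-\Delta}(\zeta)\|_{X \to X^*} + (m+|z|)\,\|R_{-\Delta}(\zeta)\|_{X \to X^*}.
\end{equation*}
Inserting the mixed--norm Schr\"odinger resolvent estimates (a separate technical input of the paper, presumably proved by partial Fourier analysis in $x_j$ combined with one--dimensional dispersive bounds) yields an explicit bound $\Phi(z)$ for $\|R_{\D_0}(z)\|_{X \to X^*}$ in terms of $|z \pm m|$ and the constant $C_0^{-1}$. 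The necessary eigenvalue condition becomes $\|V\|_Y \Phi(z) \ge 1$, which upon unpacking using $\sqrt{\V} = (n+1)C_0/\|V\|_Y - n$ should reduce to $\max(|z+m|/|z-m|,\,|z-m|/|z+m|) \ge \V$. By Apollonius's theorem, the loci $|z+m|/|z-m| = \V$ and $|z-m|/|z+m| = \V$ are circles with foci $\pm m$, bounding respectively the disks $\overline{B}_{R_0}(x_0^+)$ and $\overline{B}_{R_0}(x_0^-)$, as can be verified via the identity $(x_0^\pm)^2 - R_0^2 = m^2$ together with the explicit formulas of the statement.

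I expect the main obstacle to be the mixed--norm Schr\"odinger resolvent estimates themselves: these substitute for the Kenig--Ruiz--Sogge $L^p$--bounds (inapplicable to Dirac operators in dimension $n \ge 2$, as recalled in the Introduction) and must control both $R_{-\Delta}(\zeta)$ and $\nabla R_{-\Delta}(\zeta)$ with constants sharp enough to produce exactly the declared $C_0$. The Birman--Schwinger step and the Apollonius geometry are routine by comparison; the handling of embedded eigenvalues is taken care of by the boundary--value version of the principle already developed earlier in the paper.
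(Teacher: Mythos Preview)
Your strategy coincides with the paper's: factorize $V$ by polar decomposition, bound the Birman--Schwinger operator through the $X\to X^*$ Dirac resolvent estimate, translate to Schr\"odinger via $R_{\D_0}(z)=(\D_0+z)R_{-\Delta}(z^2-m^2)$, and read off the two disks from the Apollonius ratio. The embedded-eigenvalue case and the H\"older/Cauchy--Schwarz bounds for $A,B$ are exactly as in the paper.

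There is, however, one point that would derail the computation as you wrote it. In your displayed inequality you estimate the scalar/mass block by $(m+|z|)\|R_{-\Delta}(\zeta)\|_{X\to X^*}$, which amounts to bounding $\|m\alpha_0+zI_N\|$ by the triangle inequality. That is not sharp enough: since $\alpha_0$ has eigenvalues $\pm1$, one has $\|m\alpha_0+zI_N\|=\max\{|z+m|,|z-m|\}$, and this is what the paper uses (see the proof of Lemma~\ref{lem2}). With the sharp value, the contribution becomes
\[
\frac{\max\{|z+m|,|z-m|\}}{\sqrt{|z+m|\,|z-m|}}
=\left(\max\left\{\frac{|z+m|}{|z-m|},\frac{|z-m|}{|z+m|}\right\}\right)^{1/2}
=\left|\frac{z+m}{z-m}\right|^{\sgn(\Re z)/2},
\]
and the Birman--Schwinger inequality $1\le \|V\|_Y\,\Phi(z)$ unwinds exactly to $\max\{|z+m|/|z-m|,|z-m|/|z+m|\}\ge\V$, yielding the two Apollonius disks. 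If instead you keep $(m+|z|)$, the resulting locus $(m+|z|)^2\ge \V\,|z^2-m^2|$ is \emph{not} a pair of round disks (test $z=it$ purely imaginary: the left side is $(m+t)^2$, the right side $\V(m^2+t^2)$, while the paper's quantity equals $1$ there), so the reduction you announce would fail. Replace $(m+|z|)$ by $\max\{|z+m|,|z-m|\}$ and your argument goes through verbatim, matching the paper's proof.
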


\begin{theorem}\label{thm2}
	Let $m=0$.
	There exists a constant $C_0>0$, independent on $V$, such that if
	\begin{equation*}
	\n{V}_Y	< C_0,
	\end{equation*}
	then $\D_V$ has no eigenvalues.
	%, i.e. $\sigma_p(\D_V)=\varnothing$. 
	%
	In particular, we have 
	$\sigma(\D_{V})=\sigma_{e}(\D_{V})=\R$.
\end{theorem}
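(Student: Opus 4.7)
My plan is to combine the abstract Birman--Schwinger principle that the authors establish later in the paper with uniform mixed-norm resolvent estimates for the free Schr\"odinger operator, exploiting the Clifford-algebra identity $\D_0^{\,2} = -\Delta \cdot I_{\C^N}$. In the massless case this identity specializes to
\[
R_{\D_0}(z) = (\D_0+z)\,R_{-\Delta}(z^2),
\]
so that every bound on the Dirac resolvent can be read off from bounds on $R_{-\Delta}$ together with its gradient.

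First I would factorize $V = B^*A$ with $A := |V|^{1/2}$ and $B := U|V|^{1/2}$, where $U$ is the partial isometry in the polar decomposition $V=U|V|$. The abstract Birman--Schwinger principle, formulated to cover also embedded eigenvalues (where $R_{\D_0}(z)$ is interpreted as a limiting-absorption boundary value), states that every $z \in \sigma_p(\D_V)$ yields a non-trivial fixed point in $\H$ of the Birman--Schwinger operator $K(z):=-A\,R_{\D_0}(z)\,B^*$, and in particular $\n{K(z)}_{\H\to\H} \ge 1$. Since $\D_0$ is selfadjoint with purely absolutely continuous spectrum, there is no unperturbed point spectrum to discard separately.

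Second, I would estimate $\n{K(z)}_{\H\to\H}$ from above, uniformly in $z$. Multiplication by $|V|^{1/2}$ threads through the mixed-norm scale: a direct application of H\"older's inequality in each $x_j$-slice gives
\[
\n{A}_{X^*\to\H},\ \n{B}_{X^*\to\H} \le \n{V}_Y^{1/2}.
\]
The core analytic input is then the uniform mixed-norm bound
\[
|\zeta|^{1/2}\n{R_{-\Delta}(\zeta)}_{X\to X^*} + \n{\nabla R_{-\Delta}(\zeta)}_{X\to X^*} \le C,
\]
valid on $\C\setminus\{0\}$ and extending up to the real axis $\zeta\in(0,+\infty)$ via limiting absorption. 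Decomposing $R_{\D_0}(z)= \D_0\,R_{-\Delta}(z^2) + z\,R_{-\Delta}(z^2)$ and using $|z|=|z^2|^{1/2}$, both pieces are controlled $X\to X^*$ by a constant independent of $z$; composing with the two multiplier bounds yields $\n{K(z)}_{\H\to\H}\le C_1\n{V}_Y$ for some $C_1$ independent of $z$.

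Setting $C_0 := 1/C_1$, the assumption $\n{V}_Y < C_0$ forces $\n{K(z)}<1$ for every $z\in\C$, contradicting the Birman--Schwinger inequality, so $\sigma_p(\D_V)=\emptyset$. The same estimate, applied to the identity $\D_V-z=(\D_0-z)(I+R_{\D_0}(z)V)$ for $z\in\C\setminus\R$, gives invertibility of the Neumann series and hence $\sigma(\D_V)\subseteq\R$, while stability of the essential spectrum under the small perturbation $V$ yields $\R=\sigma_e(\D_0)\subseteq \sigma_e(\D_V)$; combining these inclusions we obtain $\sigma(\D_V)=\sigma_e(\D_V)=\R$. The hard step will be the uniform $X\to X^*$ estimate for $R_{-\Delta}(\zeta)$ and $\nabla R_{-\Delta}(\zeta)$ up to the real axis: a natural route is to conjugate by the partial Fourier transform in the $n-1$ variables $\widehat{x}_j$, reducing the problem to a one-dimensional resolvent in $x_j$ whose Green's kernel can be written down explicitly and bounded in the convolution norm $L^1_{x_j}\to L^\infty_{x_j}$ uniformly in the remaining parameters; the gradient bound will be the more delicate one because differentiation does not improve the decay of the kernel.
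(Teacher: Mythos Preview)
Your overall strategy coincides with the paper's: factor $V=B^*A$ via the polar decomposition, invoke the abstract Birman--Schwinger principle (including the embedded case), and feed in the uniform $X\to X^*$ bounds on $R_{-\Delta}$ and $\nabla R_{-\Delta}$, which via $R_{\D_0}(z)=(\D_0+z)R_{-\Delta}(z^2)$ give a $z$-independent bound on the Dirac resolvent when $m=0$. So the architecture is right.

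There are, however, two genuine gaps in the sketch.

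\textbf{The resolvent estimate.} Your proposed route---Fourier transform in $\widehat{x}_j$ and reduce to a one-dimensional resolvent in $x_j$---does not give a uniform bound. After conjugation the operator becomes $-\partial_{x_j}^2+|\widehat{\xi}_j|^2-\zeta$, whose Green kernel on $\R$ has $L^\infty_{x_j}$ norm $\bigl(2\,|\zeta-|\widehat{\xi}_j|^2|^{1/2}\bigr)^{-1}$, which blows up as $|\widehat{\xi}_j|^2\to\zeta$. Hence the $L^1_{x_j}\to L^\infty_{x_j}$ bound is \emph{not} uniform in $\widehat{\xi}_j$, and this is exactly the obstruction to a naive $L^1_{x_j}L^2_{\widehat{x}_j}\to L^\infty_{x_j}L^2_{\widehat{x}_j}$ estimate. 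The paper circumvents this by a smooth frequency partition $\{\chi_j\}_{j=1}^n$ adapted to the sphere $|\xi|^2=|\zeta|$: near the sphere one localizes to the conical sector where $|\xi_j|$ dominates $|\widehat{\xi}_j|$ (so that the dangerous ``equator'' $\xi_j=0$ is avoided), and only then performs the one-dimensional analysis in the $j$-th direction, after a change of variables that flattens the hemisphere. The output is an estimate $\n{\chi_j(|\zeta|^{-1/2}D)R_{-\Delta}(\zeta)}_{L^1_{x_j}L^2_{\widehat{x}_j}\to L^\infty_{x_j}L^2_{\widehat{x}_j}}\le C|\zeta|^{-1/2}$ for each $j$; summing over $j$ and using the definition \eqref{normX*} of the $X^*$-norm yields the $X\to X^*$ bound. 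Your remark that the gradient is ``the more delicate one'' is misplaced: the undifferentiated resolvent already requires this localization.

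\textbf{The essential spectrum.} Your Neumann-series argument correctly shows $\sigma(\D_V)\subseteq\R$, but the reverse inclusion $\R\subseteq\sigma_e(\D_V)$ does not follow from smallness of $V$; stability of the essential spectrum is a compactness statement, not a perturbative one. The paper supplies this by proving that $AR_{\D_0}(z)$ is Hilbert--Schmidt (using the explicit Bessel-function kernel of $R_{\D_0}$ together with the embedding $Y\hookrightarrow L^n$), so that $R_{\D_V}(z)-R_{\D_0}(z)$ is compact and Weyl's theorem applies. It then invokes a Gohberg--Goldberg--Kaashoek result to conclude $\sigma(\D_V)\setminus\sigma_e(\D_V)=\sigma_d(\D_V)\subseteq\sigma_p(\D_V)=\varnothing$. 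You will need some argument of this type; mere smallness of $\n{V}_Y$ is not enough.
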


%%%%%%

\begin{remark}
The crucial tool in our proof is a uniform resolvent estimate for the resolvent of the  free Dirac operator. This approach is inspired by \cite{Fra11}, where the result by Kenig, Ruiz and Sogge \cite{KRS87} was used for the same purpose. In our case, we prove in Section\til\ref{sec2} the following estimates, of independent interest:
\begin{equation*}
  \n{R_{-\Delta}(z)}_{X \to X^*} \le C |z|^{-1/2},
  \qquad
	\n{\partial_k R_{-\Delta}(z)}_{X \to X^*}  \le C.
\end{equation*}
and
\begin{equation*}%\label{RDest}
\n{ R_{\D_{0}}(z) }_{X \to X^*}
\le C \left[n + \left|\frac{z+m}{z-m}\right|^{\sgn(\Re z)/2}\right].
\end{equation*}
These can be regarded as precised resolvent estimates of Agmon--H\"{o}rmander type. Note also that similar uniform estimates, in less sharp norms, were proved earlier by the first and second Authors in \cite{DAnconaFanelli07-a,DAnconaFanelli08-a,EGG19}. 

In Section\til\ref{sec3}, we combine our uniform estimates with the Birman-Schwinger principle, enabling in Section\til\ref{sec4} to complete the proof of Theorems\til\ref{thm1}\til\&\til\ref{thm2}.
\end{remark}

\begin{remark}
	The following embedding for the space $Y$ under consideration hold:
	\begin{equation}\label{YinLn}
	Y \hookrightarrow L^{n,1}(\R^n) \hookrightarrow L^n(\R^n),
	\end{equation}
	where $L^{p,q}(\R^n)$ is the Lorentz space. Moreover, we have
	\begin{equation*}
	W^{1,1}(\R^n) 
	\hookrightarrow
	\bigcap_{j=1}^{n}  L^1_{{\widehat{x}_j}}
	L^\infty_{x_j}(\R^n)
	\hookrightarrow L^{n/(n-1),1}(\R^n),
	\end{equation*}
	where $W^{m,p}(\R^n)$ is the Sobolev space, and so, for the $2$-dimensional case, we get in particular
	\begin{equation*}
	W^{1,1}(\R^2) 
	\hookrightarrow
	Y = 
	L^1_{x_1} L^\infty_{x_2}(\R^2)
	\cap
	L^1_{x_2} L^\infty_{x_1}(\R^2)
	\hookrightarrow 
	L^{2,1}(\R^2)
	\hookrightarrow 
	L^{2}(\R^2).
	\end{equation*}
	We refer to the papers by Fournier \cite{Fou87}, Blei and Fournier \cite{BF89} and Milman \cite{Mil91} for these inclusions.
	%
	%As an easy example, potential such that $|V| \sim (1+|x|)^{-\alpha}$ with $\alpha>1$ are contemplated by our theorem.
\end{remark}

%%%%%%%%%%%%%%%%%
\section{Uniform resolvent estimates}\label{sec2}

Let us start %defining a particular decomposition for a fixed $f\in X$, i.e. functions $\{f_j\}_{j\in\{1,\dots,n\}}$ such that $\sum_{j=1}^n f_j =f$. 
fixing constants $r,R,\delta>0$ such that
\begin{equation*}\label{cond_const}
1<r<R,
\qquad
\sqrt{R^2-1} < \delta <1,
\end{equation*}
and consider
the open cover $\S=\{\S^+_{ j}, \S^-_{ j}, \S_\infty \}_{j\in\{1,\dots,n\}}$ of the space $\R^n$ defined by
\begin{equation*}
\S^\pm_{ j} = \{ \xi\in\R^n \colon \pm \xi_j > \delta|\widehat{\xi}_j|, \,\, |\xi|<R \}, 
\qquad
\S_\infty = \{ \xi\in\R^n \colon |\xi|>r \}.
\end{equation*}
We can find a smooth partition of unity $\{ \chi^+_{ j}, \chi^-_{ j} , \chi_\infty \}_{j\in\{1,\dots,n\}}$ subordinate to $\S$, i.e. a family of smooth positive functions such that
\begin{equation*}
\supp \chi^\pm_{ j} \subset \S^\pm_{ j},
\quad
\supp \chi_{\infty} \subset \S_{\infty}, 
\quad
\chi_{\infty} + \sum_{j=1}^n [ \chi^+_{ j} + \chi^-_{ j} ] \equiv 1.
\end{equation*}
From these, define the smooth partition of unity $\chi=\{\chi_j\}_{j\in\{1,\dots,n\}}$, with
\begin{equation}\label{def_chij}
\chi_j := \chi_j^+ +\chi_j^- + \frac{1}{n}\chi_{ \infty},
\end{equation}
and hence, for $j\in\{1,\dots,n\}$, the Fourier multipliers
\begin{equation*}\label{fj}
\chi_j(|z|^{-1/2} D) f =
\F^{-1}_\xi[\chi_j(|z|^{-1/2}\xi) \, \F_x f].
%\left\{
%\begin{aligned}
%&\F^{-1}_\xi[\chi_j(|z|^{-1/2} \xi) \, \F_x f], 
%&\quad\text{if $z\neq0$,} \\
%&\frac{1}{n}f, 
%&\quad\text{if $z=0$.}
%\end{aligned}
%\right.
\end{equation*}
Note in particular that
\begin{equation}\label{sumchij}
	\sum_{j=1}^n \chi_j(|z|^{-1/2} D) f = f.
\end{equation}
Therefore, the following estimates hold.

\begin{lemma}\label{lem1}
	For every $z\in \rho(-\Delta) = \C\setminus[0,+\infty)$,
	$f \in L_{x_j}^1 L_{{\widehat{x}_j}}^2$ and $j,k \in\{1,\dots,n\}$, we have that
	\begin{align*}%\label{R0}
	\n{ \chi_j \left( |z|^{-1/2} D \right) R_{-\Delta}(z) f }_{L_{x_j}^\infty L_{{\widehat{x}_j}}^2}
	&\le C \, |z|^{-1/2} 
	\n{f}_{L_{x_j}^1 L_{{\widehat{x}_j}}^2},
	\\
	%\label{R0grad}
	\n{ \chi_j \left( |z|^{-1/2} D \right) \partial_k  R_{-\Delta}(z) f }_{L_{x_j}^\infty L_{{\widehat{x}_j}}^2}
	&\le C
	\n{f}_{L_{x_j}^1 L_{{\widehat{x}_j}}^2},
	\end{align*}		
	where $\{\chi_j\}_{j\in \{1,\dots,n\}}$ are defined in \eqref{def_chij} and $C>0$ does not depend on $z$.
	In particular, it follows that
	\begin{align*}
	\n{R_{-\Delta}(z)}_{X \to X^*} & \le C |z|^{-1/2},
	\\
	\n{\partial_k R_{-\Delta}(z)}_{X \to X^*} & \le C.
	\end{align*}
\end{lemma}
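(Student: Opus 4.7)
The plan is to reduce the estimate to a one-dimensional uniform kernel bound via Plancherel in $\hat x_j$, and then to analyze each piece of the partition separately.

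First, I would apply the partial Fourier transform $\F_{\hat x_j}$. Since $\chi_j(|z|^{-1/2}D)R_{-\Delta}(z)$ is the Fourier multiplier with symbol $\chi_j(|z|^{-1/2}\xi)/(|\xi|^2-z)$, a short computation gives, for $g=\F_{\hat x_j}f$,
\begin{equation*}
\F_{\hat x_j}\bigl[\chi_j(|z|^{-1/2}D)R_{-\Delta}(z)f\bigr](x_j,\hat\xi_j)=\int_\R K_j(x_j-y_j,\hat\xi_j;z)\,g(y_j,\hat\xi_j)\,dy_j,
\end{equation*}
with the one-dimensional kernel
\begin{equation*}
K_j(\tau,\hat\xi_j;z)=\frac{1}{2\pi}\int_\R e^{i\tau\xi_j}\,\frac{\chi_j(|z|^{-1/2}\xi_j,|z|^{-1/2}\hat\xi_j)}{\xi_j^2+|\hat\xi_j|^2-z}\,d\xi_j.
\end{equation*}
Plancherel in $\hat x_j$ replaces the $L^2_{\hat x_j}$ norms by $L^2_{\hat\xi_j}$ norms, and Minkowski's integral inequality in the $y_j$-variable reduces the first asserted bound to the pointwise estimate $|K_j(\tau,\hat\xi_j;z)|\le C|z|^{-1/2}$ uniform in $\tau$, $\hat\xi_j$ and $z\in\rho(-\Delta)$.

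Next, I would rescale by $\xi_j=|z|^{1/2}\eta_j$, $\hat\eta_j=|z|^{-1/2}\hat\xi_j$, and set $w=z/|z|\in\partial B_1(0)\setminus\{1\}$. This pulls out the desired factor $|z|^{-1/2}$ and leaves the task of establishing
\begin{equation*}
\sup_{s,\,\hat\eta_j,\,w}\left|\int_\R e^{is\eta_j}\,\frac{\chi_j(\eta_j,\hat\eta_j)}{\eta_j^2+|\hat\eta_j|^2-w}\,d\eta_j\right|\le C.
\end{equation*}
Decomposing $\chi_j=\chi_j^++\chi_j^-+\tfrac{1}{n}\chi_\infty$ according to \eqref{def_chij}, the $\chi_\infty$-piece is easy: on its support $|\eta|>r>1$, hence $\bigl||\eta|^2-w\bigr|\ge r^2-1>0$ and the denominator grows like $\eta_j^2$ at infinity; direct majorization (distinguishing $|\hat\eta_j|\lesssim r$ from $|\hat\eta_j|\gtrsim r$) yields a uniform $O(1)$ bound.

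The pieces $\chi_j^\pm$ constitute the main obstacle, because the poles $\eta_j=\pm\sqrt{w-|\hat\eta_j|^2}$ of the rational factor may lie within the $\eta_j$-support of the cutoff. The decisive geometric input is the condition $\sqrt{R^2-1}<\delta$: in $\supp\chi_j^\pm$ one has $\eta_j^2>\delta^2|\hat\eta_j|^2$ and $|\eta|^2<R^2$, so $(1+\delta^2)|\hat\eta_j|^2<R^2$ forces $|\hat\eta_j|^2<R^2/(1+\delta^2)<1$, confining $w-|\hat\eta_j|^2$ to a fixed compact subset of $\C$. I would then shift the $\eta_j$-contour into the complex half-plane matching the sign of $s$, using the one-sided support condition $\pm\eta_j>\delta|\hat\eta_j|$ to move the contour past (or around) the pole $\pm\sqrt{w-|\hat\eta_j|^2}$; the resulting integral is uniformly controlled, including in the boundary limit $w\to 1$, by the standard limiting-absorption analysis.

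The derivative estimate is obtained by the same scheme applied to the symbol $i\xi_k/(|\xi|^2-z)$: on $\supp\chi_j^\pm$ the extra factor $|\xi_k|\le R|z|^{1/2}$ exactly absorbs the $|z|^{-1/2}$ from the rescaling, while on $\supp\chi_\infty$ the symbol $\xi_k/(|\xi|^2-z)$ decays like $1/|\xi|$ and its rescaled $\eta_j$-integral remains bounded. Finally, the ``in particular'' statements follow at once from \eqref{sumchij} (which decomposes $R_{-\Delta}(z)f$ as $\sum_{j}\chi_j(|z|^{-1/2}D)R_{-\Delta}(z)f$), the definition \eqref{normX*} of the $X^*$-norm as an infimum over decompositions, and the trivial inequality $\|f\|_{L^1_{x_j}L^2_{\hat x_j}}\le\|f\|_X$. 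The hardest step will be the contour-deformation argument for $\chi_j^\pm$, in particular maintaining uniformity as $w\to 1$, i.e.\ as $z$ approaches the spectrum $[0,+\infty)$ of $-\Delta$.
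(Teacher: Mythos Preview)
Your overall architecture matches the paper's: reduce via Plancherel in $\hat x_j$ to a uniform $L^\infty$ bound on a one-dimensional kernel, scale to $|z|=1$, and treat $\chi_j^\pm$ and $\chi_\infty$ separately. The ``in particular'' deduction from \eqref{sumchij} and \eqref{normX*} is also as in the paper. However, the two core estimates are not established by the arguments you sketch.

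For $\chi_j^\pm$, you propose to ``shift the $\eta_j$-contour into the complex half-plane matching the sign of $s$''. This requires the integrand to be holomorphic in $\eta_j$, but $\chi_j^\pm(\,\cdot\,,\hat\eta_j)$ is a $C_0^\infty$ cutoff, hence nowhere analytic; a naive contour shift is therefore illegitimate. The paper avoids this obstacle by a different device: it introduces the ``flattening'' isometries $T_j^\pm$ on $L^p_{x_j}L^2_{\hat x_j}$ (shifting $\xi_j\mapsto\xi_j+\varphi(\hat\xi_j)$ so that the relevant zero of the symbol moves to $\xi_j=0$), then factors the symbol as $(\xi_j-i|\varepsilon|)^{-1}$ times a smooth compactly supported function $a_{\lambda,\varepsilon}\sqrt{\chi_j^\pm\circ\Phi}$, and finally uses Young's inequality together with the explicit inverse Fourier transform of $(\xi_j-i|\varepsilon|)^{-1}$, namely $i\,e^{-|\varepsilon|x_j}\theta(x_j)$, which is bounded by $1$ uniformly in $\varepsilon$. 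The uniformity as $w\to1$ comes from the continuity of $a_{\lambda,\varepsilon}$ in this limit.

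For $\chi_\infty$ in the derivative case $k=j$, ``direct majorization'' fails: the rescaled integrand $e^{is\eta_j}\eta_j\chi_\infty(\eta)/(\eta_j^2+|\hat\eta_j|^2-w)$ is only $O(|\eta_j|^{-1})$ at infinity, hence not absolutely integrable in $\eta_j$. The paper splits $\chi_\infty$ into three pieces $\chi_\infty^1,\chi_\infty^2,\chi_\infty^3$ according to whether $|\hat\xi_j|>R$, or $|\hat\xi_j|\le R$ with $|\xi_j|\le 2R$, or $|\hat\xi_j|\le R$ with $|\xi_j|>2R$; the first two are handled by explicit computation and trivial bounds, while for the third (the tail in $\xi_j$) the integrand \emph{is} meromorphic in $\xi_j$ (since $\chi_\infty\equiv1$ there), and a genuine residue/Jordan-lemma argument on semicircular contours yields the bound. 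So contour deformation does appear in the paper, but for the piece where it is actually available, not for $\chi_j^\pm$.
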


%%%%%%

\begin{lemma}\label{lem2}
	For every $z\in\rho(\D_0) = \C\setminus\{\zeta\in\R \colon |\zeta| \ge m\}$, $f \in L_{x_j}^1 L_{{\widehat{x}_j}}^2$ and $j\in \{1,\dots,n\}$ we have that
	\begin{equation*}\label{RDest}
	\n{ \chi_j \left( |z^2-m^2|^{-1/2} D \right) R_{\D_{0}}(z) f }_{L_{x_j}^\infty L_{{\widehat{x}_j}}^2}
	\le C \left[n + \left|\frac{z+m}{z-m}\right|^{\sgn(\Re z)/2}\right]
	\n{f}_{L_{x_j}^1 L_{{\widehat{x}_j}}^2},
	\end{equation*}	
	where $\{\chi_j\}_{j\in \{1,\dots,n\}}$ are defined in \eqref{def_chij} 
	and $C>0$ is the same as in Lemma\til\ref{lem1}. In particular, it follows that
	\begin{equation*}%\label{RDest}
	\n{ R_{\D_{0}}(z) }_{X \to X^*}
	\le C \left[n + \left|\frac{z+m}{z-m}\right|^{\sgn(\Re z)/2}\right].
	\end{equation*}
\end{lemma}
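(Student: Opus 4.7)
The plan is to reduce the Dirac resolvent to the Laplace resolvent by squaring, and then invoke Lemma \ref{lem1} directly. Using the Clifford relations \eqref{clifford}, a direct expansion of $\D_0^2 = (-i\sum_k\alpha_k\partial_k + m\alpha_0)^2$ gives $\D_0^2 = -\Delta + m^2$: the symmetric part of $\alpha_j\alpha_k$ contributes only the diagonal $k=j$ terms via $\alpha_k^2=I$, while the mixed terms vanish since $\alpha_0\alpha_k+\alpha_k\alpha_0=0$ for $k\ge 1$. Hence $(\D_0-z)(\D_0+z)=-\Delta-(z^2-m^2)$, and since $z\in\rho(\D_0)$ iff $z^2-m^2\in\rho(-\Delta)=\C\setminus[0,+\infty)$, we obtain the factorization
\begin{equation*}
R_{\D_0}(z)=(\D_0+z)\,R_{-\Delta}(z^2-m^2).
\end{equation*}

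Next I would split $\D_0+z=-i\sum_{k=1}^n\alpha_k\partial_k+(m\alpha_0+z)$. Because each $\alpha_k$ and the matrix $m\alpha_0+z$ is constant in $x$, it commutes with the scalar Fourier multiplier $\chi_j(|z^2-m^2|^{-1/2}D)$, so
\begin{equation*}
\chi_j\bigl(|z^2-m^2|^{-1/2}D\bigr)R_{\D_0}(z)f = -i\sum_{k=1}^n\alpha_k\,\chi_j\,\partial_k R_{-\Delta}(z^2-m^2)f + (m\alpha_0+z)\,\chi_j R_{-\Delta}(z^2-m^2)f.
\end{equation*}
Applying Lemma \ref{lem1} with spectral parameter $z^2-m^2$ to each piece, together with $|\alpha_k|=1$, bounds the first $n$ summands by $Cn\,\n{f}_{L^1_{x_j}L^2_{{\widehat{x}_j}}}$ via the derivative estimate, and the last term by $C\,|m\alpha_0+z|\,|z^2-m^2|^{-1/2}\,\n{f}_{L^1_{x_j}L^2_{{\widehat{x}_j}}}$ via the resolvent estimate.

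The sharp coefficient comes from computing the matrix norm exactly: since $\alpha_0$ is diagonal with entries $\pm1$, $m\alpha_0+zI$ is diagonal with entries $z\pm m$, so $|m\alpha_0+z|=\max(|z+m|,|z-m|)$. The elementary identity $|z+m|^2-|z-m|^2=4m\Re z$ gives $|z+m|\ge|z-m|$ iff $\Re z\ge 0$, whence
\begin{equation*}
\frac{|m\alpha_0+z|}{|z^2-m^2|^{1/2}}=\frac{\max(|z+m|,|z-m|)}{|z+m|^{1/2}|z-m|^{1/2}}=\left|\frac{z+m}{z-m}\right|^{\sgn(\Re z)/2},
\end{equation*}
which delivers the first estimate. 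For the $X\to X^*$ bound, I would invoke \eqref{sumchij} with the rescaled parameter $|z^2-m^2|^{-1/2}$ to decompose $R_{\D_0}(z)f=\sum_{j=1}^n\chi_j(|z^2-m^2|^{-1/2}D)R_{\D_0}(z)f$, estimate it via \eqref{normX*} by the sum of the $L^\infty_{x_j}L^2_{{\widehat{x}_j}}$ norms, and use $\n{f}_{L^1_{x_j}L^2_{{\widehat{x}_j}}}\le\n{f}_X$; the resulting factor of $n$ is absorbed into $C$.

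The main subtlety is quantitative rather than structural: once Lemma \ref{lem1} is available, the argument is mechanical, but a crude bound $|m\alpha_0+z|\le m+|z|$ would destroy the sharp factor $|(z+m)/(z-m)|^{\sgn(\Re z)/2}$, which stays bounded as $|z|\to\infty$ and diverges only when $z$ approaches the threshold $\pm m$ from the appropriate side. The sign function in the exponent, in particular, is dictated precisely by the sign of $4m\Re z$ in the identity above, and must be extracted from the diagonal structure of $\alpha_0$ rather than from a generic operator-norm inequality; this is the one step that demands genuine care.
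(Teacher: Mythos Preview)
Your proposal is correct and follows essentially the same approach as the paper: factorize $R_{\D_0}(z)=(\D_0+z)R_{-\Delta}(z^2-m^2)$ via the Clifford relations, split $\D_0+z$ into derivative and zero-order parts, apply Lemma~\ref{lem1} to each, and use $|m\alpha_0+z|=\max(|z+m|,|z-m|)$ to extract the sharp factor. Your treatment is in fact slightly more detailed than the paper's, spelling out explicitly why the $\sgn(\Re z)$ exponent arises and how the extra factor of $n$ in the $X\to X^*$ bound is handled.
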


\begin{remark}
	Before to proceed further, we give an heuristic explanation for the choice of the localization in the frequency domain via the Fourier multiplier $\chi_j(|z|^{-1/2}D)$. Since the symbol $(|\xi|^2-z)^{-1}$ of the resolvent $R_{-\Delta}(z)$ blows-up as $z \to \zeta\ge0$, our trick is to use the norms $L^\infty_{x_j} L^2_{{\widehat{x}_j}}$ for $j\in\{1,\dots,n\}$, which allows us to restrict the problem from the spherical surface $\{\xi\in\R^n \colon |\xi|=|z|^{-1/2}\}$ to the \lq\lq equator'' given by $\{ \xi\in\R^n \colon \xi_j=0, |\widehat{\xi}_j|=|z|^{-1/2} \}$. We then avoid these regions thanks to the smooth functions $\chi_j$. 
\end{remark}

\begin{proof}[Proof of Lemma\til\ref{lem1}.]
The \lq\lq in particular'' part trivially follows from \eqref{sumchij}
and from the definitions of the norms on $X$ and $X^*$.
	
For the simplicity, from now on $C>0$ will stand for a generic positive constant independent on $z$ and which can change from line to line. Clearly, by scaling, we can consider only unitary $z\in\C\setminus \{1\}$.
Thus it is sufficient to prove that
\begin{equation*}
\n{ \chi_j(D) \partial_k^s  R_{-\Delta}(z) f }_{L_{x_j}^\infty L_{{\widehat{x}_j}}^2}
\le C
\n{f}_{L_{x_j}^1 L_{{\widehat{x}_j}}^2},
\end{equation*}	
where $|z|=1$, $s\in \{0,1\}$, $\partial^0_k=1, \partial^1_k=\partial_k$ and $j,k\in\{1,\dots,n\}$.
This is equivalent to
	\begin{equation}\label{estz=1}
	\n{ \F^{-1}_{\xi} \left( \frac{\xi_k^s  \, \chi_j(\xi) }{|\xi|^2-\l-i\e} \F_{x}f \right) }_{L_{x_j}^\infty L_{{\widehat{x}_j}}^2}
	\le C
	\n{f}_{L_{x_j}^1 L_{{\widehat{x}_j}}^2},
	\end{equation}
where we have settled $z=\l+i\e$, $\l^2+\e^2=1$ and $z\neq1$. We proceed splitting $\chi_j$ in the functions which appear in its definition \eqref{def_chij}, localizing ourselves in the regions of the frequency domain near the unit sphere, i.e. $\S^\pm_j$, and far from it, i.e. $\S_\infty$.

\textit{Estimate on $\S^\pm_j$.}
We want to prove
\begin{equation}\label{eq_Sj}
\n{ \F^{-1}_{\xi} \left( \frac{\xi_k^s \chi_j^\pm(\xi)}{|\xi|^2-\l-i\e} \F_{x}f \right) }_{L_{x_j}^\infty L_{{\widehat{x}_j}}^2}
\le C
\n{f}_{L^2_{{\widehat{x}_j}} L^1_{x_j}}.
\end{equation}
Let us define the family of operators
\begin{equation*}
T^\pm_j \colon L^p_{x_j}L^2_{{\widehat{x}_j}} \to  L^p_{x_j}L^2_{{\widehat{x}_j}}, \quad
f \mapsto T^\pm_j f := \F_\xi^{-1} \left(\hat{f} \circ \Phi \right),
\end{equation*}
where
\begin{equation*}
\Phi(\xi) := (\xi_j+\varphi(\widehat{\xi}_j), \widehat{\xi}_j),
\quad
\vp(\widehat{\xi}_j) := \pm\sqrt{|1-|\widehat{\xi}_j|^2|}.
\end{equation*}
Roughly speaking, the operators $T^\pm_j$ \lq\lq flatten'' in the frequency domain the hemisphere of the unitary sphere, namely $\{ \xi\in\R^n \colon |\xi|=1, \pm\xi_j > 0 \}$. 
Writing more explicitly the introduced operators, we have that
\begin{equation*}
\begin{split}
T^\pm_j f(x)
&= \frac{1}{(2\pi)^{n/2}} \int_{\R^n} e^{i x\cdot\xi} \hat{f}(\xi_j+\varphi(\widehat{\xi}_j), \widehat{\xi}_j) d\xi \\
&= \frac{1}{(2\pi)^{n}} \int_{\R^n} e^{i x\cdot\xi} \int_{\R^n} f(y) e^{-iy \cdot(\xi_j+\varphi(\widehat{\xi}_j), \widehat{\xi}_j)} dy d\xi \\
&= \frac{1}{(2\pi)^{n}} 
\int_{\R^{n-1}} e^{i {\widehat{x}_j}\cdot\widehat{\xi}_j} 
\int_{\R^{n-1}} e^{-i y'\cdot\widehat{\xi}_j}
\int_{\R}\int_{\R} f(y) 
e^{i (x_j-y_j)\xi_j -i y_j\varphi(\widehat{\xi}_j)}  
dy_j d\xi_j dy'd\widehat{\xi}_j \\
&= \frac{1}{2\pi} 
\F^{-1}_{\widehat{\xi}_j} \F_{y'} \left(
e^{-i x_j \varphi(\widehat{\xi}_j)}
\int_{\R}\int_{\R} f(y) 
e^{i (x_j-y_j)\xi_j}  
dy_j d\xi_j \right)\\
&= \F^{-1}_{\widehat{\xi}_j} \F_{y'} \left(
e^{-i x_j \varphi(\widehat{\xi}_j)}  
f(x_j,y')
\right)
\end{split}
\end{equation*}
where we exploited the substitution $\xi_j\mapsto\xi_j-\varphi(\widehat{\xi}_j)$ in the fourth equality. Hence, applying the Plancherel Theorem two times, we obtain that $T^\pm_j$ are isometries respect to the norm $L^p_{x_j}L^2_{{\widehat{x}_j}}$, i.e. for $p \in [1,+\infty]$ it holds the relation
\begin{equation}\label{iso}
\n{T^\pm_j f}_{L^p_{x_j}L^2_{{\widehat{x}_j}}} = \n{f}_{L^p_{x_j}L^2_{{\widehat{x}_j}}}	.
\end{equation}

Thanks to this equality, we can write
\begin{equation*}\label{est_conv}
\begin{split}
\n{ \F^{-1}_{\xi} \left( \frac{\xi_k^s \chi_j^\pm(\xi)}{|\xi|^2-\l-i\e} \F_{x}f \right) }_{L_{x_j}^\infty L_{{\widehat{x}_j}}^2}
&= \n{ T^\pm_j \F^{-1}_{\xi} \left( \frac{\xi_k^s \chi_j^\pm(\xi)}{|\xi|^2-\l-i\e} \F_{x}f \right) }_{L_{x_j}^\infty L_{{\widehat{x}_j}}^2} 
\\
%&= \n{ T^\pm_j\F^{-1} \left( \frac{\wh{f^\pm_{ j}}}{|\xi|^2-1-i\e}\right)}_{L^p_{x_j}L^2_{{\widehat{x}_j}}} \\
&= \n{ \F_\xi^{-1} \left( \frac{ (\xi_k^s \chi_j^\pm ) \circ \Phi}{|\Phi|^2-\l-i\e}
	\, \wh{T^\pm_j f} \right)}_{L^\infty_{x_j}L^2_{{\widehat{x}_j}}} 
\\
&= \frac{1}{\sqrt{2\pi}}
\n{ {a_{\l,\e}}(D) \psi *_{x_j}
	\F_{\xi_j}^{-1} \left( \frac{\wh{T^\pm_j f} }{\xi_j-i|\e|} \right)}_{L^\infty_{x_j}L^2_{\widehat{\xi}_j}}
\\
&\le \frac{1}{\sqrt{2\pi}}
\n{ {a_{\l,\e}}(D) \psi }_{L^1_{x_j}L^{\infty}_{\widehat{\xi}_j}}
\n{ \F_\xi^{-1} \left( \frac{\wh{T^\pm_j f} }{\xi_j-i|\e|} \right)}_{L^{\infty}_{x_j}L^2_{\widehat{\xi}_j}}
\end{split}
\end{equation*}
where the last inequality follows from the Young's Theorem and
\begin{gather*}
{a_{\l,\e}}(D) \psi
=\F^{-1}_{\xi_j} \left( a_{\l,\e} \F_{x_j}(\psi) \right) ,
\\
a_{\l,\e} (\xi) := 
\frac{(\xi_j-i|\e|) \, 
\left(\xi_k \pm \delta_{k,j}\sqrt{1-|\widehat{\xi}_j|^2}\right)^s}
{\xi_j \left(\xi_j \pm 2\sqrt{1-|\widehat{\xi}_j|^2} \right)+1-\l-i\e} 
\,
\sqrt{(\chi^\pm_j \circ \Phi)(\xi)} ,
\\
\psi(x_j,\widehat{\xi}_j) = \F_{\xi_j}^{-1} \left(\sqrt{(\chi^\pm_j \circ \Phi)(\xi)}\right)
.
\end{gather*}
Observe that we dropped the absolute value in the definition of $\varphi$, namely it results $\sqrt{|1-|\widehat{\xi}_j|^2|}=\sqrt{1-|\widehat{\xi}_j|^2}$, because $\supp\{\chi_j^\pm \circ \Phi\} \subset \{\xi\in\R^n \colon |\widehat{\xi}_j|\le1\}$, thanks to the definition of $\S^\pm_j$ and from the assumption $\delta\ge\sqrt{R^2-1}$. Despite of the involute definition, it is simple to see that ${a_{\l,\e}}(D) \psi \in \mathfrak{S}$, where $\mathfrak{S}$ is the space of Schwartz functions, since ${a_{\l,\e}}(D) \psi$ is the inverse Fourier transform of a smooth compactly supported function. 
Moreover, considering ${a_{\l,\e}}(D) \psi$ as a pseudo-differential operator of symbol $a_{\l,\e}$ applied to the Schwartz function $\psi$, let us to observe that, since letting $\l+i\e\to1$ we have
\begin{equation*}
	\lim_{\l+i\e\to1} a_{\l,\e} (\xi) =
	\frac{\left(\xi_k \pm \delta_{k,j}\sqrt{1-|\widehat{\xi}_j|^2}\right)^s}{\xi_j \pm 2\sqrt{1-|\widehat{\xi}_j|^2}} 
	\,
	\sqrt{\chi_j^\pm \left(\xi_j\pm\sqrt{1-|\widehat{\xi}_j|^2},\widehat{\xi}_j \right)} =: a(\xi) \in\mathfrak{S}
\end{equation*}
pointwisely, then ${a_{\l,\e}}(D)\psi \to {a}(D)\psi$ in $\mathfrak{S}$, and so 
\begin{equation*}
\lim_{\l+i\e\to1}
\n{{a_{\l,\e}}(D) \psi}_{L^1_{x_j}L^\infty_{\widehat{\xi}_j}}
= \n{a(D)\psi}_{L^1_{x_j}L^\infty_{\widehat{\xi}_j}} < +\infty.
\end{equation*}
Thus, $\n{{a_{\l,\e}}(D) \psi}_{L^1_{x_j}L^\infty_{\widehat{\xi}_j}}$ is uniformly bounded respect to $z\in\C$ unitary.

Hence we have obtained
\begin{equation}\label{est1}
\n{ \F^{-1}_{\xi} \left( \frac{\xi_k^s \chi_j^\pm(\xi)}{|\xi|^2-\l-i\e} \F_{x}f \right) }_{L_{x_j}^\infty L_{{\widehat{x}_j}}^2}
\le C
\n{ \F_\xi^{-1} \left( \frac{\wh{T^\pm_j f} }{\xi_j-i|\e|} \right)}_{L^{\infty}_{x_j}L^2_{\widehat{\xi}_j}}.
\end{equation}
By Plancherel and Young Theorems, and by equality \eqref{iso}, we have
\begin{equation*}
\begin{split}
\sqrt{2\pi} \n{ \F_{\xi_j}^{-1} \left( \frac{\wh{T^\pm_j f} }{\xi_j-i|\e|} \right)}_{L^\infty_{x_j}L^2_{\widehat{\xi}_j}} 
&= \n{ \F_{\xi_j}^{-1} \left( \frac{1 }{\xi_j-i|\e|}\right) *_{x_j} \F_{{\widehat{x}_j}} (T^\pm_j f) }_{L^\infty_{x_j}L^2_{\widehat{\xi}_j}} 
\\
& = \n{ i e^{-|\e| x_j } \theta *_{x_j} 
	\F_{{\widehat{x}_j}} (T^\pm_j f) }_{L^\infty_{x_j}L^2_{\widehat{\xi}_j}}
\\
& \le \n{ e^{-|\e| x_j } \theta *_{x_j} 
	\n{T^\pm_j f}_{L^2_{{\widehat{x}_j}}} }_{L^\infty_{x_j}} 
\\
&\le \n{ e^{-|\e| x_j } \theta }_{L^\infty_{x_j}} \n{f}_{L^1_{x_j}L^2_{{\widehat{x}_j}}}
\\
&= \n{f}_{L^1_{x_j}L^2_{{\widehat{x}_j}}}
,
\end{split}
\end{equation*}
where $\theta \equiv \theta(x_j)$ is the Heaviside function. Thus, inserting in \eqref{est1}, we get \eqref{eq_Sj}.

%%%%%%%%%%%%%

\textit{Estimate on $\S_\infty$.}
We want to prove now
\begin{equation}\label{eq_Sinfty}
\n{ \F^{-1}_{\xi} \left( \frac{\xi_k^s \chi_\infty(\xi) }{|\xi|^2-\l-i\e} \F_{x}f \right) }_{L_{x_j}^\infty L_{{\widehat{x}_j}}^2}
\le C
\n{f}_{L^2_{{\widehat{x}_j}} L^1_{x_j}}.
\end{equation}
We need to distinguish three cases depending on whether we are localized in the regions defined by
\begin{align*}
	\Cyl^{1}_{R,j} &:= \{ \xi\in\R^n \colon |\widehat{\xi}_j|>R \},
	\\
	\Cyl^{2}_{R,j} &:= \{ \xi\in\R^n \colon |\widehat{\xi}_j|\le R, |\xi_j|\le 2R \},
	\\
	\Cyl^{3}_{R,j} &:= \{ \xi\in\R^n \colon |\widehat{\xi}_j|\le R, |\xi_j|> 2R \}.
\end{align*}
%Denoting with $\chrt_{X}$ the characteristic function of a set ${X}$, i.e. $\chrt_{X}(\xi)=1$ if $\xi\in{X}$ and $\chrt_{X}(\xi)=0$ otherwise, we set then
%\begin{equation*}
%	\chi^1_\infty = \chrt_{C^1_{R,j}},
%	\quad
%	\chi^2_\infty = \chrt_{C^2_{R,j}}\chi_\infty,
%	\quad
%	\chi^3_\infty = \chrt_{C^3_{R,j}}.
%\end{equation*}
Let us set then
\begin{align*}
	\chi^1_\infty(\xi) &:=
	\begin{cases}
	\makebox[\longtext][l]{1} &\text{if $|\widehat{\xi}_j|>R$,}\\
	\makebox[\longtext][l]{0} &\text{otherwise,}
	\end{cases}
	\\
	\chi^2_\infty(\xi) &:=
	\begin{cases}
	\chi_\infty(\xi) &\text{if $|\widehat{\xi}_j|\le R$ and $|\xi_j|\le 2R$,}\\
	\makebox[\longtext][l]{0} &\text{otherwise,}
	\end{cases}
	\\
	\chi^3_\infty(\xi) &:=
	\begin{cases}
	\makebox[\longtext][l]{1} &\text{if $|\widehat{\xi}_j|\le R$ and $|\xi_j|> 2R$,}\\
	\makebox[\longtext][l]{0} &\text{otherwise,}
	\end{cases}
\end{align*}
and observe that $\chi_\infty = \chi^1_\infty + \chi^2_\infty + \chi^3_\infty$, since $\chi_\infty \equiv 1$ for $|\xi|>R$, from the requirements on the cover $\S$ and the partition $\chi$. 

By Plancherel Theorem and H\"older's, Young's and Minkowski's integral inequalities, for $h\in\{1,2,3\}$ we infer
\begin{equation*}
\n{ \F^{-1}_{\xi} \left( \frac{\xi_k^s \chi^h_\infty(\xi)}{|\xi|^2-\l-i\e} \F_{x}f \right) }_{L_{x_j}^\infty L_{{\widehat{x}_j}}^2}
\le
{C_h}
\n{f}_{L^1_{x_j} L^2_{{\widehat{x}_j}}}
\end{equation*} 
with
\begin{equation}\label{Ci}
	C_h :=
	\frac{1}{\sqrt{2\pi}}
	\n{ \F^{-1}_{\xi_j} \left( \frac{\xi_k^s \, \chi^h_\infty(\xi) }{\xi_j^2 + \s^2} \right) }_{L^\infty_{x_j} L^\infty_{\widehat{\xi}_j}},
	\quad
	\s:=\sqrt{|\widehat{\xi}_j|^2-\l-i\e}.
\end{equation}
Here and below, we always consider the principal branch of the complex square root function.
Clearly, if we prove that $C_h$ for $h\in\{1,2,3\}$ is bounded uniformly respect to $\l,\e$, we recover \eqref{eq_Sinfty}.

\textit{Estimate on $\Cyl^1_{R,j}$.} Observing that $\chi_{ \infty}^1(\xi) \equiv 
\chi_{ \infty}^1(\widehat{\xi}_j)$ and
noting that
\begin{align*}
\Re\sigma &= \sqrt{ \frac{|\sigma|^2+|\widehat{\xi}_j|^2-\l}{2} } >0,
\end{align*}
we can compute explicitly the Fourier transforms:
\begin{itemize}
	\item if $k \neq j$, then
	\begin{equation*}
	\begin{split}
		C_1 =  
		\n{ \chi_\infty^1(\widehat{\xi}_j)
			\xi_k^s \frac{e^{-\s|x_j|}}{2\s} }_{L^\infty_{x_j} L^\infty_{\widehat{\xi}_j}}
		&\le
		\n{ \chi_\infty^1(\widehat{\xi}_j) |\widehat{\xi}_j|^s \frac{e^{- \Re\sigma |x_j|}}{2|\s|} }_{L^\infty_{x_j} L^\infty_{\widehat{\xi}_j}}
		\\&\le
		\sup_{|\widehat{\xi}_j|>R} \frac{|\widehat{\xi}_j|^s}{2(|\widehat{\xi}_j|^4-2\l|\widehat{\xi}_j|^2+1)^{1/4}}
		\\&\le
		\begin{cases}
		\frac{R^s}{2\sqrt{R^2-1}} & \text{if $\l>0$,}
		\\
		1/2 & \text{if $\l\le0$;}
		\end{cases}
	\end{split}
	\end{equation*}
	\item if $s=1$, $k = j$, then
	\begin{equation*}
		C_1 =  
		\n{ \chi_\infty^1(\widehat{\xi}_j) \frac{i}{2} \sgn\{x_j\} e^{-\s|x_j|} }_{L^\infty_{x_j} L^\infty_{\widehat{\xi}_j}}
		\le \frac{1}{2}.
	\end{equation*}
\end{itemize}

\textit{Estimate on $\Cyl^2_{R,j}$.} Expliciting the definition of inverse Fourier transform in \eqref{Ci} and from the fact that $\chi^2_\infty(\xi) = 0$ when $|\xi|<r$, we can trivially see that
\begin{equation*}
C_2 \le
\frac{1}{2\pi}
\n{  \int_{-\infty}^{+\infty} 
	\frac{ |e^{i x_j \xi_j}| \, |\xi_k^s| \, \chi^2_\infty(\xi)}{||\xi|^2-\l|}
	d\xi_j
}_{L^\infty_{x_j} L^\infty_{\widehat{\xi}_j}}
\le
\frac{(2R)^s}{2\pi}
\n{ 
	\frac{\chi^2_\infty(\xi)}{|\xi|^2-1}
}_{L^\infty_{\widehat{\xi}_j} L^1_{\xi_j}}
\end{equation*}
which is finite since $\chi^2_\infty$ is compactly supported due to its definition.

\textit{Estimate on $\Cyl^3_{R,j}$.} 
Expliciting the inverse Fourier transform in \eqref{Ci}, recalling the definition of $\chi^3_\infty$ and exploiting the substitution $\xi_j \mapsto \sgn\{x_j\}\xi_j$, we have
\begin{align*}
C_3 &=
\frac{1}{2\pi}
\n{ (1-\chi^1_\infty)(\widehat{\xi}_j) \int_{|\xi_j|>R} e^{i |x_j| \xi_j} \frac{\xi_k^s }{\xi_j^2 + \s^2} d\xi_j }_{L^\infty_{x_j} L^\infty_{\widehat{\xi}_j}}
\\
&=
\frac{1}{2\pi}
\n{ \int_{|\xi_j|>R} \psi(x_j,\widehat{\xi}_j,\xi_j) d\xi_j }_{L^\infty_{x_j} L^\infty_{\widehat{\xi}_j}}
\end{align*}
where, for fixed $\widehat{\xi}_j, x_j$,
%such that $|\widehat{\xi}_j|<R$ and $x_j\neq0$
the complex function $\psi(x_j,\widehat{\xi}_j,\cdot) \colon\C\to\C$ is defined by
\begin{equation*}
	\psi(x_j,\widehat{\xi}_j,w) :=
	\left\{
	\begin{aligned}
	& (1-\chi^1_\infty)(\widehat{\xi}_j) \frac{\xi_k^s}{w^2 + \sigma^2}e^{i |x_j| w}, &&\text{if $k\neq j$,}
	\\
	& (1-\chi^1_\infty)(\widehat{\xi}_j) \frac{w}{w^2 + \sigma^2} e^{i |x_j| w}, &&\text{if $s=1$, $k=j$,}
	\end{aligned}
	\right.
\end{equation*}
which is holomorphic in $\C\setminus\{w_-,w_+\}$, where $w_\pm = \pm i\sigma$. Observe that $\psi\equiv0$ for $|\widehat{\xi}_j|>R$, and if $|\widehat{\xi}_j|\le R$ we have
\begin{equation}\label{|sigma|}
|w_\pm| = |\sigma| = \sqrt[4]{ (|\widehat{\xi}_j|^2-\l)^2 +\e^2 } < \sqrt{2}R .
\end{equation}
Define, for a radius $A>0$, the semicircle 
$
	\gamma_A := \{Ae^{i\theta} \colon \theta\in [0,\pi] \}
$
in the upper half complex plane.
Fixing $\rho>R$, by the Residue Theorem, we get
\begin{equation*}
	\left( 
	\int_{[-\rho,-2R]} - \int_{\gamma_{2R}} + \int_{[2R,\rho]} + \int_{\gamma_\rho}
	\right) \psi(x_j,\widehat{\xi}_j,w) dw = 0.
\end{equation*}
Observing that we can consider $x_j\neq0$, letting $\rho\to +\infty$ we can apply the Jordan's lemma to the integral on the curve $\gamma_\rho$, finally getting
\begin{align*}
	C_3 &=
	\frac{1}{2\pi} \n{ \int_{\gamma_{2R}} \psi(x_j, \widehat{\xi}_j, w) dw }_{L^\infty_{x_j} L^\infty_{\widehat{\xi}_j}}
	\\
	&\le
	\frac{(2R)^s}{2\pi} \n{ (1-\chi^1_\infty)(\widehat{\xi}_j) \int_{0}^{\pi}  \frac{d\theta }{|4R^2e^{2i\theta} + \sigma^2|} }_{L^\infty_{\widehat{\xi}_j}}
	\\
	&\le
	(2R)^{s-2}
	%\frac{(2R)^s}{2\pi} \frac{\pi}{2R^2}
\end{align*} 
where we used the relation \eqref{|sigma|}.

Summing all up, we can finally recover the desired estimate \eqref{estz=1}, where the positive constant $C$ does not depend on $\l,\e$, but only on $R$ and the partition $\chi$.
\end{proof}

We can prove now Lemma\til\ref{lem2}, as a straightforward corollary of Lemma\til\ref{lem1}.

\begin{proof}[Proof of Lemma\til\ref{lem2}]
	Again, the \lq\lq in particular'' part follows from
	\eqref{sumchij}
	and the definitions of the $X$ and $X^*$ norms.
	
	From the anticommutation relations \eqref{clifford} we infer, for every $z\in\C$, 
	\begin{equation*}\label{trick}
	(\D_{0} - zI_N)(\D_{0} + zI_N) = (-\Delta + m^2 -z^2)I_N.
	\end{equation*}
	So, thanks to this well-known trick, for $z\in\rho(\D_0)$ we can write
	\begin{equation*}%\label{RDandR0}
	R_{\D_{0}}(z) = (\D_{0} +zI_N) R_{-\Delta}(z^2-m^2)I_N.
	\end{equation*}
	Set $f_j = \chi_j(|z^2-m^2|^{-1/2}D)f$ for the simplicity. Exploiting Lemma~\ref{lem1}, it is easy to get
	\begin{equation*}
	\begin{split}
	\n{ R_{\D_{0}}(z) f_j}_{L^{\infty}_{x_j} L^2_{\widehat{x}_j}} 
	\le&\,
	\n{ \sum_{k=1}^n \alpha_k \partial_k R_{-\Delta}(z^2-m^2) f_j}_{L^{\infty}_{x_j} L^2_{\widehat{x}_j}}
	\\
	&+\n{ (m\a_0 + z I_{N}) R_{-\Delta}(z^2-m^2) f_j}_{L^{\infty}_{x_j} L^2_{\widehat{x}_j}}
	\\
	\le&\, 
	\sum_{k=1}^n
	\n{ \partial_k R_{-\Delta}(z^2-m^2) f_j}_{L^{\infty}_{x_j} L^2_{\widehat{x}_j}}
	\\
	&+ \max\{|z+m|, |z-m|\} \n{ R_{-\Delta}(z^2-m^2) f_j}_{L^{\infty}_{x_j} L^2_{\widehat{x}_j}} 
	\\
	\le&\, 
	C \left[n + \left|\frac{z+m}{z-m}\right|^{\sgn(\Re z)/2}\right]
	\n{f}_{L^{1}_{x_j} L^2_{\widehat{x}_j}} .
	\end{split}
	\end{equation*}
	as claimed.
\end{proof}

%%%%%%%

\section{The Birman-Schwinger principle}\label{sec3}

In this section, working in great generality, we precisely define the closed extension of a perturbed operator with a factorizable potential, formally defined as $H_0+B^*A$. Next, we state an abstract version of the Birman-Schwinger principle.
We will follow the approach of Kato \cite{Kato66} and Konno and Kuroda \cite{KK66}.

	Let $\H, \H'$ be Hilbert spaces and consider the densely defined, closed, linear operators
	\begin{equation*}
	%\begin{aligned}
	H_0 \colon \dom(H_0) \subseteq \H \to \H,
	\quad
	A \colon \dom(A) \subseteq \H \to \H', 
	\quad
	B \colon \dom(B) \subseteq \H \to \H', 
	%\end{aligned}
	\end{equation*}
	such that $\rho(H_0) \neq \varnothing$
	and
	\begin{equation*}
		\dom(H_0) \subseteq\dom(A),
		\quad
		\dom(H_0^*) \subseteq\dom(B).
	\end{equation*}
	For the simplicity, we assume also that $\sigma(H_0) \subset \R$ and $\sigma_p(H_0)=\varnothing$.
	For $z\in\rho(H_0)$, denote by $R_{H_0}(z)=(H_0-z)^{-1}$ the resolvent operator of $H_0$.
	
As a warm-up, to sketch the idea of the principle, let us firstly consider the case of bounded operators $A$ and $B$. Thus, $H=H_0+B^*A$ is well-defined as a sum operator. Moreover, if $z\in\rho(H_0)$, we can define the bounded operator $Q(z)=A(H_0-z)^{-1}B^*$. It is easy to check that $z\in\sigma_p(H)\cap\rho(H_0)$ implies $-1\in\sigma_p(Q(z))$, and so \mbox{$\n{Q(z)}_{\H'\to\H'} \ge 1$}. Hence, a bound for the norm of the Birman-Schwinger operator $Q(z)$ give us information on the localization of the non-embedded eigenvalues of $H$.

Let us return to the general setting of an unbounded potential $B^*A$. Furthermore, we study also the case of embedded eigenvalues. To afford these, we assume a stronger set of hypotheses respect to the ones in \cite{KK66}, which instead will be proved here in Lemma\til\ref{lemBSop}.

\begin{hypothesis}[\textbf{Hyp}]\label{hyp}
	Let $X$ a complex Banach space of function on $\R^n$ such that the following estimates hold true:
	\begin{align*}
	\n{AR_{H_0}(z)}_{X\to \H'} &\le \alpha \Lambda(z),
	\\
	\n{B^*}_{\H' \to X} &\le \beta,
	\end{align*}
	where $\alpha,\beta>0$, $\Lambda(z)$ is finite for $z\in\rho(H_0)$ and continue up to $\sigma(H_0) \setminus \Omega$, for some set $\Omega \subseteq \sigma(H_0)$, and there exists $z_0\in\rho(H_0)$ such that $\Lambda(z_0)<(\alpha\beta)^{-1}$. 
\end{hypothesis}

\begin{lemma}[Birman-Schwinger operator]
	\label{lemBSop} 
	Assume (Hyp). Then, for $z\in\rho(H_0)$, the operator $AR_{H_0}(z)B^*$, densely defined on $\dom(B^*)$, has a closed extension $Q(z)$ in $\H'$,
		\begin{equation*}
		Q(z) = \overline{AR_{H_0}(z)B^*} %\in \mathcal{B}(\H'),
		\end{equation*}
	with norm bounded by
		\begin{equation}\label{Qnorm}
		\n{Q(z)}_{\H'\to\H'} \le \alpha\beta\Lambda(z).
		\end{equation}
	%where $\mathcal{B}(\H')$ is the Banach space of bounded linear operators in $\H'$ and $\overline{T}$ is the closure of a bounded and densely defined operator $T$.
	
	Moreover, there exists $z_0 \in\rho(H_0)$ such that $-1 \in \rho(Q(z_0)).$
\end{lemma}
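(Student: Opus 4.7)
The plan is to construct $Q(z)$ as the composition of two continuous extensions, identify it with the claimed closure, and then conclude with a Neumann series argument at the point $z_0$ supplied by (Hyp).

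First I would establish the norm bound \eqref{Qnorm}. Since $B$ is closed and densely defined, the adjoint $B^*$ is itself densely defined in $\H'$; together with $\n{B^*}_{\H'\to X}\le\beta$, this lets us extend $B^*$ by continuity to a bounded map $\widetilde{B^*}\colon\H'\to X$ of norm at most $\beta$. Analogously, since $R_{H_0}(z)$ sends $\H$ into $\dom(H_0)\subseteq\dom(A)$, the operator $AR_{H_0}(z)$ is naturally defined on $\H$; the estimate $\n{AR_{H_0}(z)}_{X\to\H'}\le\alpha\Lambda(z)$ says that on a dense subspace of $X$ this action is controlled by the $X$-norm, and therefore extends by density to a bounded operator $\widetilde{AR_{H_0}(z)}\colon X\to\H'$ of norm at most $\alpha\Lambda(z)$. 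Setting $Q(z):=\widetilde{AR_{H_0}(z)}\circ\widetilde{B^*}$ defines a bounded operator on $\H'$, and the multiplicativity of operator norms gives $\n{Q(z)}_{\H'\to\H'}\le\alpha\beta\Lambda(z)$, which is precisely \eqref{Qnorm}.

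Next I would verify the identification $Q(z)=\overline{AR_{H_0}(z)B^*}$. For any $g\in\dom(B^*)$, the element $B^*g$ belongs to $X\cap\H$, so $R_{H_0}(z)B^*g\in\dom(H_0)\subseteq\dom(A)$ and $AR_{H_0}(z)B^*g$ is defined in the original sense; on this element the extension $\widetilde{AR_{H_0}(z)}$ agrees with the original operator, hence $Q(z)g=AR_{H_0}(z)B^*g$. Thus $Q(z)$ extends $AR_{H_0}(z)B^*|_{\dom(B^*)}$; being everywhere-defined and bounded — hence closed — it is the unique continuous extension and must coincide with the closure $\overline{AR_{H_0}(z)B^*}$.

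Finally, for the resolvent statement, (Hyp) yields $z_0\in\rho(H_0)$ with $\Lambda(z_0)<(\alpha\beta)^{-1}$, and the bound just proved gives $\n{Q(z_0)}_{\H'\to\H'}<1$. The Neumann series $\sum_{k\ge0}(-Q(z_0))^k$ then converges in operator norm to an inverse of $I+Q(z_0)$, so $-1\in\rho(Q(z_0))$. The only delicate point, rather than a true obstacle, is the bookkeeping linking $X$ and $\H$: one must check that $B^*g\in X\cap\H$ for $g\in\dom(B^*)$ so that the original action of $AR_{H_0}(z)$ on $B^*g$ matches its $X\to\H'$ extension. Once this compatibility is observed, the remaining steps — multiplicativity of norms and Neumann inversion — are entirely standard.
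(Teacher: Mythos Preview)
Your proposal is correct and follows essentially the same approach as the paper: bound $AR_{H_0}(z)B^*$ on $\dom(B^*)$ via the factorization through $X$, pass to the closure by density, and then invoke the Neumann series at the point $z_0$ given by (Hyp). The paper's argument is terser (it writes the single inequality $\n{AR_{H_0}(z)B^*\varphi}_{\H'}\le\n{AR_{H_0}(z)}_{X\to\H'}\n{B^*}_{\H'\to X}\n{\varphi}_{\H'}$ and concludes by density), whereas you spell out the extensions $\widetilde{B^*}$ and $\widetilde{AR_{H_0}(z)}$ and the compatibility on $X\cap\H$ explicitly, but the substance is identical.
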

\begin{proof}
	For $z\in\rho(H_0)$ and $\varphi\in\dom(B^*)$, we have
	\begin{equation*}
	\n{AR_{H_0}(z)B^* \varphi}_{\H'\to\H'} \le \n{AR_{H_0}(z)}_{X\to \H'} \n{B^*}_{\H' \to X} \n{\varphi}_{\H'}
	\end{equation*}
	and so by density \eqref{Qnorm}.
	In particular, for $z_0\in\rho(H_0)$ such that $\Lambda(z_0)<(\alpha\beta)^{-1}$, we get $\n{Q(z_0)}_{\H'\to\H'} < 1$. Hence, from Neumann series there exists $(1+Q(z_0))^{-1}$, and so $-1\in\rho(Q(z_0))$.
\end{proof}

%%%%

Let us collect some useful facts in the next lemma.

\begin{lemma}\label{lem-rel}
	Suppose (Hyp) and fix $z, z_1, z_2 \in \rho(H_0)$. Then the following relations hold true:
	\begin{enumerate}[label=(\roman*)]
		\item $AR_{H_0}(z) \in \mathcal{B}(\H,\H'),
		\quad
		\overline{R_{H_0}(z)B^*} = [B(H_0^*-\overline{z})^{-1}]^* \in \mathcal{B}(\H',\H)$,
		
		\item
			$\overline{R_{H_0}(z_1)B^*}-\overline{R_{H_0}(z_2)B^*}
			= (z_1-z_2) R_{H_0}(z_1) \overline{R_{H_0}(z_2)B^*}
			= (z_1-z_2) R_{H_0}(z_2) \overline{R_{H_0}(z_1)B^*}$,
		
		\item $Q(z) = A\overline{R_{H_0}(z)B^*}$,
		\quad
		$Q(\overline{z})^* = B\overline{R_{H_0}(\overline{z})^* A^*}$,
		
		\item $\ran(\overline{R_{H_0}(z)B^*}) \subseteq \dom(A),
		\quad
		\ran(\overline{R_{H_0}(\overline{z})^* A^*}) \subseteq \dom(B)$,
		
		\item $Q(z_1) -Q(z_2)
			= (z_1-z_2) AR_{H_0}(z_1) \overline{R_{H_0}(z_2)B^*}
			= (z_1-z_2) AR_{H_0}(z_2) \overline{R_{H_0}(z_1)B^*}$.
	\end{enumerate}
\end{lemma}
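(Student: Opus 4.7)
The five statements are essentially linked, and the strategy is to work from (i) outward: once the boundedness of $AR_{H_0}(z)$ and of $\overline{R_{H_0}(z)B^*}$ is set up cleanly, the remaining identities follow from the resolvent identity and an approximation argument based on closedness of $A$ and $B$.

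\smallskip
\textbf{Step (i).} The map $R_{H_0}(z):\H\to\dom(H_0)\subseteq\dom(A)$ is bounded, so $AR_{H_0}(z)$ is everywhere defined. Since $A$ is closed and $R_{H_0}(z)$ is bounded, the composition is closed; by the closed graph theorem it belongs to $\mathcal{B}(\H,\H')$. For the second identity I would first observe that $B(H_0^*-\overline{z})^{-1}$ is likewise everywhere defined and closed, hence bounded. Its Hilbert-space adjoint $T:=[B(H_0^*-\overline{z})^{-1}]^*$ is then in $\mathcal{B}(\H',\H)$. Testing against $\phi\in\dom(B^*)$ and $\psi\in\H$,
\begin{equation*}
(T\phi,\psi)_{\H}=(\phi,B(H_0^*-\overline{z})^{-1}\psi)_{\H'}
=(B^*\phi,(H_0^*-\overline{z})^{-1}\psi)_{\H}
=(R_{H_0}(z)B^*\phi,\psi)_{\H},
\end{equation*}
so $T$ agrees with $R_{H_0}(z)B^*$ on the dense set $\dom(B^*)$. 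Being bounded, $T=\overline{R_{H_0}(z)B^*}$.

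\smallskip
\textbf{Step (ii).} I would apply the ordinary resolvent identity $R_{H_0}(z_1)-R_{H_0}(z_2)=(z_1-z_2)R_{H_0}(z_1)R_{H_0}(z_2)$ to a vector of the form $B^*\phi$ with $\phi\in\dom(B^*)$. Both sides are then elements of $\H$, and on this dense set the claimed identities of (ii) hold. Since each side, after taking closures, defines a bounded operator on $\H'$ (by (i)), equality extends to all of $\H'$ by density.

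\smallskip
\textbf{Steps (iii)--(iv).} The key idea is an approximation plus closedness of $A$. Given $\phi\in\H'$, pick $\phi_n\in\dom(B^*)$ with $\phi_n\to\phi$. By (i), $R_{H_0}(z)B^*\phi_n=\overline{R_{H_0}(z)B^*}\phi_n\to\overline{R_{H_0}(z)B^*}\phi$ in $\H$. On the other hand $R_{H_0}(z)B^*\phi_n\in\dom(H_0)\subseteq\dom(A)$ and $A\,R_{H_0}(z)B^*\phi_n=Q(z)\phi_n\to Q(z)\phi$ in $\H'$ by Lemma~\ref{lemBSop}. Since $A$ is closed, $\overline{R_{H_0}(z)B^*}\phi\in\dom(A)$ and $A\overline{R_{H_0}(z)B^*}\phi=Q(z)\phi$. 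This yields simultaneously the range inclusion in (iv) and the identity $Q(z)=A\,\overline{R_{H_0}(z)B^*}$ in (iii). The parallel statements for $Q(\overline{z})^*$ follow by replacing $(H_0,A,B,z)$ with $(H_0^*,B,A,\overline{z})$ throughout; nothing new is needed since (Hyp) is symmetric in the roles once (i) is available.

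\smallskip
\textbf{Step (v).} I would apply $AR_{H_0}(z_1)$ (bounded by (i)) to the identity in (ii). Using (iii) to recognize $Q(z_1)=A\overline{R_{H_0}(z_1)B^*}$ and $Q(z_2)=A\overline{R_{H_0}(z_2)B^*}$, plus the associativity permitted by boundedness of $AR_{H_0}(z_j)$, gives the stated resolvent-type identity for $Q$.

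\smallskip
\textbf{Expected obstacle.} The only delicate point is the adjoint computation in (i): one must be careful that $B$ is only densely defined and closed, so the naive rule $(B\,C)^*=C^*B^*$ has to be verified by hand (as above) rather than invoked abstractly. After this, the rest is routine density and closed-graph bookkeeping.
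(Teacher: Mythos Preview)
Your argument is correct and self-contained. Note, however, that the paper does not actually prove this lemma: its proof reads, in full, ``See Lemma~2.2 in \cite{GLMZ05}.'' So you have supplied a direct verification where the authors defer to the literature. Your line of reasoning---closed graph for (i), resolvent identity plus density for (ii), approximation via closedness of $A$ (resp.\ $B$) for (iii)--(iv), and then composing for (v)---is exactly the standard route and is presumably what \cite{GLMZ05} does as well.

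One small remark on your ``symmetry'' shortcut in (iii)--(iv): the norm estimates in (Hyp) are \emph{not} symmetric under the swap $(H_0,A,B)\leftrightarrow(H_0^*,B,A)$, so you cannot literally invoke Lemma~\ref{lemBSop} again after swapping. What you actually need for the adjoint statements is (a) the domain inclusion $\dom(H_0^*)\subseteq\dom(B)$ and closedness of $B$ (both assumed), and (b) boundedness of $Q(\overline{z})^*$, which you get for free as the adjoint of the bounded operator $Q(\overline{z})$. With that observation the parallel argument goes through; you may want to say this explicitly rather than appeal to a symmetry of (Hyp) that is not quite there.
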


\begin{proof}
	See Lemma\til2.2 in \cite{GLMZ05}.
\end{proof}

%%%%

We can construct now the extension of the perturbed operator $H_0+B^*A$.

\begin{lemma}[Extension of operators with factorizable potential]
	\label{lem_ext}
	Suppose (Hyp). Fix $z_0 \in\rho(H_0)$, given by Lemma\til\ref{lemBSop}, such that $-1\in\rho(Q(z_0))$. The operator
	\begin{equation}\label{R}
	R_{H}(z_0) = R_{H_0}(z_0) - \overline{R_{H_0}(z_0)B^*} (1+Q(z_0))^{-1} AR_{H_0}(z_0)
	\end{equation}
	defines a densely defined, closed, linear operator $H$ in $\H$, which have $R_{H}(z_0)$ as resolvent,
	\begin{equation*}
	R_{H}(z_0) = (H-z_0)^{-1},
	\end{equation*}
	and which is an extension of $H_0 + B^*A$.
	%
	%Moreover, for $z\in\rho(H_0)$, then $z\in\sigma_p(H)$ if and only if $-1\in\sigma_p(Q(z))$, and $z\in\rho(H)$ if and only if $-1\in\rho(Q(z))$.
	%the subspaces $\ker(H-zI_\H)$ and $\ker(I_{\H'}+Q(z))$ are isomorphic.
\end{lemma}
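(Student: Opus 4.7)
My plan is to verify, in this order: (i) $R_H(z_0)\in\mathcal{B}(\H)$; (ii) $\ker R_H(z_0)=\{0\}$; (iii) $\overline{\ran R_H(z_0)}=\H$; (iv) define $\dom(H):=\ran R_H(z_0)$ and $H-z_0:=R_H(z_0)^{-1}$; (v) check $H\supseteq H_0+B^*A$.

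Step (i) will follow directly by reading off bounds for the four factors in \eqref{R}: the outer resolvents are bounded on $\H$, the operators $AR_{H_0}(z_0)$ and $\overline{R_{H_0}(z_0)B^*}$ are bounded between $\H$ and $\H'$ by Lemma~\ref{lem-rel}(i), and $(1+Q(z_0))^{-1}\in\mathcal{B}(\H')$ is provided by Lemma~\ref{lemBSop}. For step (ii), the key algebraic trick is: if $R_H(z_0)f=0$, set $g:=(1+Q(z_0))^{-1}AR_{H_0}(z_0)f\in\H'$; then \eqref{R} reads $R_{H_0}(z_0)f=\overline{R_{H_0}(z_0)B^*}\,g$, the right-hand side lies in $\dom(A)$ by Lemma~\ref{lem-rel}(iv), applying $A$ and using Lemma~\ref{lem-rel}(iii) gives $AR_{H_0}(z_0)f=Q(z_0)g$, which combined with the defining relation $(1+Q(z_0))g=AR_{H_0}(z_0)f$ forces $g=0$; then $R_{H_0}(z_0)f=0$ yields $f=0$ since $z_0\in\rho(H_0)$.

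For step (iii) I plan to run the dual version of this argument: taking adjoints in \eqref{R} is legal because all factors are bounded, and using Lemma~\ref{lem-rel}(i) together with the identifications $(AR_{H_0}(z_0))^*=\overline{R_{H_0}(z_0)^*A^*}$ and $[(1+Q(z_0))^{-1}]^*=(1+Q(z_0)^*)^{-1}$, I expect to repeat the algebra of step (ii) with the roles of $A$ and $B$ interchanged, deducing $\ker R_H(z_0)^*=\{0\}$ and hence density of $\ran R_H(z_0)$. Once (i)--(iii) are in place, step (iv) is automatic: the inverse of the bounded injective operator $R_H(z_0)$ is closed with dense domain $\ran R_H(z_0)$, so $H:=z_0+R_H(z_0)^{-1}$ is closed, densely defined, and $R_H(z_0)=(H-z_0)^{-1}\in\mathcal{B}(\H)$ gives $z_0\in\rho(H)$.

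Step (v) will be a direct verification: for $\psi\in\dom(H_0+B^*A)$, set $\varphi:=(H_0+B^*A-z_0)\psi$; inverting $H_0-z_0$ gives $\psi=R_{H_0}(z_0)\varphi-\overline{R_{H_0}(z_0)B^*}A\psi$; applying $A$ yields $(1+Q(z_0))A\psi=AR_{H_0}(z_0)\varphi$, whence $A\psi=(1+Q(z_0))^{-1}AR_{H_0}(z_0)\varphi$; substituting back produces precisely $\psi=R_H(z_0)\varphi$, so $\psi\in\dom(H)$ and $(H-z_0)\psi=\varphi$. I expect the main obstacle to be step (iii): although conceptually dual to step (ii), it requires correctly identifying the adjoint of the unbounded closure $\overline{R_{H_0}(z_0)B^*}$ and carefully tracking domains so that each intermediate vector lies in the domain of whatever unbounded operator is applied next; Lemma~\ref{lem-rel}(i)--(iv) with $A$ and $B$ interchanged is the bookkeeping tool.
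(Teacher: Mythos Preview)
Your plan is correct and is exactly the classical Kato/Konno--Kuroda construction; the paper itself gives no argument here but simply cites Theorem~2.3 in \cite{GLMZ05} and \cite{Kato66}, whose proofs proceed along precisely the lines you sketch (boundedness, injectivity, dense range via the dual computation, and the extension check). Your anticipated obstacle in step~(iii) is not a real one: the needed adjoint identities and domain inclusions with the roles of $A$ and $B$ swapped are already recorded in Lemma~\ref{lem-rel}(i),(iii),(iv), so the dual argument goes through verbatim.
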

\begin{proof}
	We refer to Theorem\til2.3 in \cite{GLMZ05}. See also the work by Kato \cite{Kato66}.
\end{proof}

%%%%%%%%%%%

Finally, we enunciate the Birman-Schwinger principle.

\begin{lemma}[Birman-Schwinger principle]
	\label{lemBS}
	Suppose (Hyp). Let $z_0 \in\rho(H_0)$, given by Lemma\til\ref{lemBSop}, such that $-1\in\rho(Q(z_0))$ and $H$ be the extension of $H_0+B^*A$, given by Lemma\til\ref{lem_ext}.
	Fix $z \in \sigma_p(H)$ with eigenfunction $0\neq\psi\in\dom(H)$, i.e. $H\psi=z\psi$. 
	
	Setted $\phi := A\psi$, we have that $\phi\neq0$ and
	\begin{enumerate}[label=(\roman*)]
		\item if $z \in \rho(H_0)$ then
		\begin{equation*}
		Q(z)\phi = -\phi
		\end{equation*}
		and in particular
		\begin{equation*}
		1 \le \n{Q(z)}_{\H'\to\H'} \le \alpha\beta\Lambda(z);
		\end{equation*}
		
		\item 
		if $z\in \sigma(H_0)\setminus\Omega$ and $\psi\in X_{\text{loc}}$, i.e. $\chrt_K \psi \in X$ for every compact set $K \subset \R^n$ where $\chrt_K$ is the indicator function of $K$,	
		then
		\begin{equation}\label{embQ}
		\lim_{\e\to0^\pm} (\varphi,Q(z+i\e)\phi)_{\H'} = -(\varphi, \phi)_{\H'}
		\end{equation}
		for every $\varphi \in \H'$ compactly supported, and in particular
		\begin{equation}\label{embQest}
		1 \le \liminf_{\e\to0^\pm} \n{Q(z+i\e)}_{\H'\to\H'} \le \alpha\beta \lim_{\e\to0^\pm} \Lambda(z+i\e).
		\end{equation}
	\end{enumerate}
\end{lemma}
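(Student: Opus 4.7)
Since $z\in\rho(H_0)$, my plan is to exploit the explicit formula~\eqref{R} for $R_H(z_0)$ to derive the algebraic identity $\psi=-\overline{R_{H_0}(z)B^*}\phi$, from which $\phi=-Q(z)\phi$ is immediate upon applying $A$ and invoking Lemma~\ref{lem-rel}(iii). From $H\psi=z\psi$ we have $\psi=(z-z_0)R_H(z_0)\psi$; substituting \eqref{R} and setting $w:=R_{H_0}(z_0)\psi\in\dom(H_0)$, after applying $A$ (legal by Lemma~\ref{lem-rel}(iv)) one identifies $\phi=(z-z_0)(1+Q(z_0))^{-1}Aw$ together with $\psi+\overline{R_{H_0}(z_0)B^*}\phi=(z-z_0)w$. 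Applying $(H_0-z_0)$ to this relation (legal since the right-hand side is in $\dom(H_0)$) yields
\begin{equation*}
(z-z_0)\psi=(H_0-z_0)\left[\psi+\overline{R_{H_0}(z_0)B^*}\phi\right].
\end{equation*}
Now apply $R_{H_0}(z)$, use the operator relation $R_{H_0}(z)(H_0-z_0)=I+(z-z_0)R_{H_0}(z)$ on $\dom(H_0)$, simplify, and invoke Lemma~\ref{lem-rel}(ii) to telescope the two resolvents into $\overline{R_{H_0}(z)B^*}\phi$; this produces $\psi=-\overline{R_{H_0}(z)B^*}\phi$ and hence $\phi=-Q(z)\phi$. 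Nonvanishing of $\phi$ is immediate ($\phi=0$ would force $\psi=0$), and the norm bound follows from $\n{\phi}_{\H'}=\n{Q(z)\phi}_{\H'}\le\n{Q(z)}_{\H'\to\H'}\n{\phi}_{\H'}$ together with Lemma~\ref{lemBSop}.

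\textbf{Part (ii).} The identity $(z-z_0)\psi=(H_0-z_0)[\psi+\overline{R_{H_0}(z_0)B^*}\phi]$ derived in part~(i) holds without the assumption $z\in\rho(H_0)$, so my plan is to replay the manipulation at $z+i\e$ for $\e\neq 0$ small. Applying $R_{H_0}(z+i\e)$, using $R_{H_0}(z+i\e)(H_0-z_0)=I+(z+i\e-z_0)R_{H_0}(z+i\e)$, and exploiting the cancellation $(z-z_0)-(z+i\e-z_0)=-i\e$ combined with Lemma~\ref{lem-rel}(ii) yields
\begin{equation*}
\psi=-\overline{R_{H_0}(z+i\e)B^*}\phi-i\e\,R_{H_0}(z+i\e)\psi.
\end{equation*}
Applying $A$ then produces the key identity
\begin{equation*}
\phi+Q(z+i\e)\phi=-i\e\,AR_{H_0}(z+i\e)\psi.
\end{equation*}
Pairing with $\varphi\in\H'$ of compact support, the statement~\eqref{embQ} reduces to showing that $\e\,(\varphi,AR_{H_0}(z+i\e)\psi)_{\H'}\to 0$ as $\e\to 0^\pm$; once this is proved, \eqref{embQest} follows at once from Lemma~\ref{lemBSop} combined with the continuity of $\Lambda$ up to $\sigma(H_0)\setminus\Omega$.

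\textbf{Main obstacle.} The delicate step is precisely the vanishing of $\e\,(\varphi,AR_{H_0}(z+i\e)\psi)_{\H'}$: the natural bound $\n{AR_{H_0}(z+i\e)}_{X\to\H'}\le\alpha\Lambda(z+i\e)$ from (Hyp) requires $\psi\in X$, whereas only $\psi\in X_{\text{loc}}$ is available. My plan is to split $\psi=\chrt_K\psi+(1-\chrt_K)\psi$ for a compact set $K$ containing $\supp\varphi$ in its interior. The first summand lies in $X$ by the $X_{\text{loc}}$ hypothesis, so its contribution is controlled by $|\e|\,\alpha\Lambda(z+i\e)\n{\chrt_K\psi}_X\n{\varphi}_{\H'}$, which vanishes as $\e\to 0^\pm$ by continuity of $\Lambda$. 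The tail $(1-\chrt_K)\psi$, supported in $\R^n\setminus K$ and thus spatially separated from $\supp\varphi$, is handled by the off-diagonal kernel behaviour of $R_{H_0}(z+i\e)$ between these two regions, producing a contribution that is small as $K\to\R^n$. Taking iterated limits---first $\e\to 0^\pm$, then $K\to\R^n$---completes the argument.
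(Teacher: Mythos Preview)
Your route to the central identity $Q(z+i\e)\phi=-\phi-i\e\,AR_{H_0}(z+i\e)\psi$ is correct and equivalent to the paper's: the paper multiplies the relation $(H_0-z-i\e)R_{H_0}(z_0)\psi=-(z-z_0)\overline{R_{H_0}(z_0)B^*}\widetilde\psi-i\e R_{H_0}(z_0)\psi$ by $(1+Q(z_0))^{-1}AR_{H_0}(z+i\e)$ and invokes Lemma~\ref{lem-rel}(v), arriving at the same formula. Part~(i) is fine, and for part~(ii) your reduction of \eqref{embQ} to the vanishing of $\e\,(\varphi,AR_{H_0}(z+i\e)\psi)_{\H'}$ is exactly right.

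The genuine gap is the handling of the tail $(1-\chrt_K)\psi$. The phrase ``off-diagonal kernel behaviour of $R_{H_0}(z+i\e)$'' appeals to structure that is nowhere in (Hyp): the abstract hypothesis furnishes only the $X\to\H'$ bound on $AR_{H_0}$ and no kernel information whatsoever. Even in the concrete Dirac setting, for $z\in\sigma(H_0)$ the limiting resolvent kernel is oscillatory rather than decaying, so spatial separation of supports does not by itself make the contribution small; and your iterated-limit scheme requires a bound on the tail term that is \emph{uniform in $\e$}, which you have not supplied. The paper avoids the splitting altogether: taking $K=\supp\varphi$ and using that $A$ is a local (multiplication) operator, it asserts the exact identity $(\varphi,AR_{H_0}(z+i\e)\psi)_{\H'}=(\varphi,AR_{H_0}(z+i\e)[\chrt_K\psi])_{\H'}$, so that only the single compact piece ever enters and (Hyp) applies directly with no tail to control. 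A smaller point: your argument for $\phi\neq0$ in part~(i) rests on $\psi=-\overline{R_{H_0}(z)B^*}\phi$, which is unavailable when $z\in\sigma(H_0)$; for part~(ii) the paper instead observes that $\phi=0$ would force $(H_0-z)R_{H_0}(z_0)\psi=0$ and hence $z\in\sigma_p(H_0)=\varnothing$.
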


\begin{proof}
	Let us prove only case (ii) for the embedded eigenvalues, being case (i) similar and easier, adapting the argument of Lemma\til1 in \cite{KK66} 
	%and of Theorem\til3.2 in \cite{GLMZ05} 
	for the non-embedded eigenvalues.
	
	Noted that $H\psi =z \psi$ is equivalent to
	\begin{equation}\label{bs0}
		\psi = (z-z_0) R_{H}(z_0) \psi,
	\end{equation}	
	we obtain from \eqref{R} that
	\begin{equation}\label{bs1}
		(H_0 - z -i\e) R_{H_0}(z_0) \psi
		=
		-(z-z_0) \overline{R_{H_0}(z_0)B^*} (1+Q(z_0))^{-1} AR_{H_0}(z_0) \psi
		-i\e R_{H_0}(z_0) \psi.
	\end{equation}
	Define $\widetilde{\psi}=(1+Q(z_0))^{-1} AR_{H_0}(z_0)\psi$. If $\widetilde{\psi}=0$, by \eqref{bs1} follows $(H_0-z)R_{H_0}(z_0)\psi=0$. Since $0\neq R_{H_0}(z_0)\psi \in \dom(H_0)$, we get $z\in\sigma_p(H_0)$, which however is empty by our assumptions on $H_0$. Thus, we proved $\widetilde{\psi}\neq0$. 
	Moreover, we can show the identity 
	\begin{equation}\label{bs2}
		\phi = A\psi = (z-z_0) (1+Q(z_0))^{-1} AR_{H_0}(z_0)\psi = (z-z_0) \widetilde{\psi},
	\end{equation}
	from which in particular $\phi\neq 0$.
	Indeed, by \eqref{R} and (iii) of Lemma\til\ref{lem-rel}, it follows that
	\begin{equation*}
		AR_{H}(z_0) = (1+Q(z_0))^{-1}AR_{H_0}(z_0)
	\end{equation*}
	which combined with \eqref{bs0} gives us \eqref{bs2}.
	
	Let us prove the limit \eqref{embQ}. Multiplying by $(1+Q(z_0))^{-1} AR_{H_0}(z+i\e)$ both sides of \eqref{bs1}, we obtain
	\begin{equation*}
		\begin{split}
		\widetilde{\psi}
		=
		&-(z-z_0) (1+Q(z_0))^{-1} A R_{H_0}(z+i\e) \overline{R_{H_0}(z_0)B^*} \widetilde{\psi}
		\\
		&- i\e (1+Q(z_0))^{-1} A R_{H_0}(z+i\e) R_{H_0}(z_0) \psi
		\end{split}
	\end{equation*}
	and so, by (v) of Lemma\til\ref{lem-rel} and by the resolvent identity, we have
	\begin{equation*}
	\begin{split}
	\widetilde{\psi}
	=
	&- \frac{z-z_0}{z-z_0+i\e} (1+Q(z_0))^{-1} [Q(z+i\e) - Q(z_0)] \widetilde{\psi}
	\\
	&- \frac{i\e}{z-z_0+i\e} (1+Q(z_0))^{-1} A [R_{H_0}(z+i\e) - R_{H_0}(z_0)] \psi
	\\
	=
	& \, \widetilde{\psi} - \frac{z-z_0}{z-z_0+i\e} (1+Q(z_0))^{-1} (1+Q(z+i\e))\widetilde{\psi}
	\\
	&- \frac{i\e}{z-z_0+i\e} (1+Q(z_0))^{-1} A R_{H_0}(z+i\e)\psi,
	\end{split}
	\end{equation*}
from which, using identity \eqref{bs2}, we finally arrive to
\begin{equation*}
	Q(z+i\e) \phi
	= -\phi
	- i\e A R_{H_0}(z+i\e)\psi.
\end{equation*}

Fixed $\varphi \in \H'$ with compact support $K$, we have that
\begin{equation}\label{Qint_eq}
(\varphi, Q(z+i\e) \phi)_{\H'}
= -(\varphi, \phi)_{\H'}
- i\e (\varphi, A R_{H_0}(z+i\e)\psi)_{\H'}.
\end{equation}
Since
\begin{equation*}
	\begin{split}
	(\varphi, A R_{H_0}(z+i\e)\psi)_{\H'}
	&=
	\int_{K} \varphi^{\dagger} \cdot A R_{H_0}(z+i\e)\psi
	\\
	&=
	\int_{K} R_{H_0}(z-i\e) A^* \varphi^{\dagger} \cdot \psi
	\\
	&=
	(\varphi, A R_{H_0}(z+i\e) [\chrt_K \psi] )_{\H'},
	\end{split}
\end{equation*}
by (Hyp) we get
\begin{equation*}
	|(\varphi, A R_{H_0}(z+i\e)\psi)_{\H'}| 
	\le
	\n{\varphi}_{\H'} \n{A R_{H_0}(z+i\e) [\chrt_K \psi]}_{\H'}
	\le
	\alpha\Lambda(z+i\e) \n{\varphi}_{\H'} \n{\chrt_K \psi}_X,
\end{equation*}
and so the last term in \eqref{Qint_eq} vanishes as $\e\to0^{\pm}$, proving \eqref{embQ}.

Finally, we prove the first inequality in \eqref{embQest}, being the second one given by Lemma\til\ref{lemBSop}. 
For $n\in \N$, let $\chi_n(x) := \chi(x/n)$ where $\chi \in C_0^\infty(\R^n)$ is a cut-off function such that $\chi(x)=1$ for $|x|\le 1$ and $\chi(x)=0$ for $|x|\ge 2$. Since, by \eqref{embQ},
\begin{equation*}
	|(\chi_n \phi , \phi)_{\H'}|
	=
	\lim_{\e\to0^{\pm}}
	|(\chi_n \phi, Q(z+i\e)\phi)_{\H'}| \le
	\n{\chi_n \phi}_{\H'} \n{\phi}_{\H'} 
	\liminf_{\e\to0^{\pm}}
	\n{Q(z+i\e)}_{\H'\to\H'}  
\end{equation*}
we get \eqref{embQest} letting $n\to+\infty$.
\end{proof}

\section{Proof of the Theorems}\label{sec4}

Here we simply apply the results of the last section to our problem. In our case, $\H=\H'=L^2(\R^n;\C^N)$, $X=\bigcap_{j=1}^n L^1_{x_j}L^2_{\widetilde{x}_j}$, $H_0$ is the free Dirac operator $\D_{0}$, and the factorization of $V$ is reached considering the polar decomposition $V=UW$, i.e. $W=(V^* V)^{1/2}$ and the unitary matrix $U$ is a partial isometry, and setting $A=W^{1/2}$ and $B=W^{1/2}U^*$.

It is easy to see that hypothesis (Hyp) holds thanks to Lemma\til\ref{lem2} with
\begin{equation*}
	\alpha=\beta=\n{V}_Y^{1/2},
	\quad
	\Lambda(z)=nC \left[n + \left|\frac{z+m}{z-m}\right|^{\sgn(\Re z)/2}\right],
	\quad
	\Omega =
	\begin{cases}
	\{-m,m\} & \text{if $m\neq 0$,}
	\\
	\varnothing & \text{if $m=0$.}
	\end{cases}
\end{equation*}
Indeed, for $\varphi\in\H$,
	\begin{equation*}
		\begin{split}
		\n{A R_{\D_{0}}(z) \varphi}_{\H} 
		&\le 
		\sum_{j=1}^{n} 
		\n{A \, \chi_j(|z^2-m^2|^{-1/2}D) R_{\D_{0}}(z) \varphi}_{\H} 
		\\
		&\le 
		C \left[n + \left|\frac{z+m}{z-m}\right|^{\sgn(\Re z)/2}\right]
		\sum_{j=1}^{n} 
		\n{A}_{L^2_{x_j} L^\infty_{\widehat{x}_j}}
		\n{\varphi}_{L^1_{x_j} L^2_{\widehat{x}_j}} 
		\\
		&
		\le nC \left[n + \left|\frac{z+m}{z-m}\right|^{\sgn(\Re z)/2}\right]
		\n{V}^{1/2}_{Y} \n{\varphi}_{X},
		\\
		\n{B^* \varphi}_X
		&\le
		\n{V}_Y \n{\varphi}_{\H},
		\end{split}
	\end{equation*}
where we used the relation
\begin{equation*}
\n{A}_{L^2_{x_j} L^\infty_{\widehat{x}_j}}
=
\n{B^*}_{L^2_{x_j} L^\infty_{\widehat{x}_j}}
=
\n{W^{1/2}}_{L^2_{x_j} L^\infty_{\widehat{x}_j}}
=
\n{V}^{1/2}_{L^1_{x_j} L^\infty_{\widehat{x}_j}}.
\end{equation*}
To see that there exists $z_0 \in \rho(\D_0)$ such that $\Lambda(z_0)<(\alpha\beta)^{-1}$, define 
\begin{equation*}
	C_0=[n(n+1)C]^{-1},
	\quad
	\V = [(n+1)C_0/\n{V}_Y - n]^2.
\end{equation*}
Since from the hypothesis of Theorems\til\ref{thm1}\til\&\til\ref{thm2} we have $\n{V}_Y < C_0$ and so $\V>1$, the condition
$1 \le \alpha\beta\Lambda(z)$
is equivalent to $\V \le |z/z|$ if $m=0$, and to
\begin{equation}\label{disks}
\left( \Re z - \sgn(\Re z) m\,\frac{\V^2+1}{\V^2-1}\right)^2 + \Im z^2 
\le
\left(m\,\frac{2\V}{\V^2-1}\right)^2
\end{equation}
if $m>0$. Then, if $m=0$ it is sufficient to choose $z_0\in\C\setminus\R$, whereas if $m>0$ we take $z_0 \in \rho(\D_0)$ outside the disks in the statement of Theorem\til\ref{thm1}. 
Finally, it is trivial to see that if $\psi\in\H$, then $\psi\in X_{\text{loc}}$, since, for every compact set $K\in\R^n$,
we have
$
\n{\chrt_K \psi}_{L^1_{x_j} L^2_{\widehat{x}_j}}
\le 
\kappa \n{\psi}_{L^2}
$,
for some constant $\kappa$ depending on $K$.

Thus, we can apply Lemma\til\ref{lemBS}, which combined with relation \eqref{disks} and with relation $\V \le |z/z|$ proves 
%the claims of 
Theorem\til\ref{thm1} and Theorem\til\ref{thm2} respectively. 
% The final claim of Theorem\til\ref{thm2},
% concerning the absence of residual spectrum for $\D_{V}$,
% is an immediate consequence of the theory developed in
% \cite{BorisovKrejcirik08-a}.

For the final claim in Theorem\til\ref{thm2},
we will follow the argument in \cite{CLT14} to prove that the potential $V \in Y = \bigcap_{j=1}^n L^1_{x_j} L^\infty_{\widehat{x}_j} (\R^n)$ leaves the essential spectrum invariant and that the residual spectrum of $\D_V$ is absent.
To get the invariance of the essential spectrum, it is sufficient to prove that, fixed $z \in \rho(\D_0)$ such that $-1\in \rho(Q(z))$, the operator $A R_{\D_0}(z)$ is an Hilbert-Schmidt operator, and hence compact. Thus, identity \eqref{R} gives us
\begin{equation*}
R_{\D_V}(z) - R_{\D_0}(z)
=
- \overline{R_{\D_0}(z) B^*}
(1+Q(z))^{-1}
A R_{\D_0}(z)
\end{equation*}
from which follows that $R_{\D_V}(z) - R_{\D_0}(z)$ is compact and so, by Theorem 9.2.4 in \cite{EE},
\begin{equation*}
\sigma_e(\D_V) = \sigma_e(\D_0) = 
(-\infty,m] \cup [m,\infty).
\end{equation*}

To see that $A R_{\D_0}(z)$ is an Hilbert-Schmidt operator, we need to prove that its kernel $A(x) \K(z,x-y)$ is in $L^2(\R^n\times\R^n;\C^N)$, where $\K(z,x-y)$ is the kernel of the resolvent $(\D_0-z)^{-1}$.
By the Young inequality
\begin{equation}\label{HSnorm}
\begin{split}
\n{A(\D-z)^{-1}}_{HS}^2
=
\int_{\R^n} \int_{\R^n} |A(x)|^2 |\K(z,x-y)|^2 dx dy 
%\\
%&=
%\int_{\R^n}
%|V| *
%|\K|^2 dy
%\\
\le
\n{V}_{L^p}
\n{\K}_{L^{2q}}^2
\end{split}
\end{equation}
where $1/p + 1/q =2$. Hence we need to find in which Lebesgue space $L^{2q}(\R^n;\C^N)$ the kernel $\K(z,x)$ lies.
For $z\in\rho(-\Delta)=\C\setminus[0,\infty)$, it is well-know (see e.g. \cite{GS}) that the kernel $\K_0(z,x-y)$ of the resolvent operator $(-\Delta-z)^{-1}$ is given by
\begin{equation*}
\K_0(z,x-y)
= 
\frac{1}{(2\pi)^{n/2}} 
\left( \frac{\sqrt{-z}}{|x-y|}\right)^{\frac{n}{2}-1}
K_{\frac{n}{2}-1} (\sqrt{-z}|x-y|)
\end{equation*}
where $K_{\nu}(w)$ is the modified Bessel function of second kind and we consider the principal branch of the complex square root.
Fixed now $z\in\rho(\D_0)=\C\setminus\{\zeta\in\R \colon |\zeta|\ge m\}$, from the identity
\begin{equation*}
(\D_0-zI_N)^{-1} = (\D_0+zI_N)(-\Delta+m^2-z^2)^{-1}I_N
\end{equation*}
and relations 9.6.26 in \cite{AS} for the derivative of the modified Bessel functions, we obtain
\begin{align*}
\K(z,x-y)
&=
\frac{1}{(2\pi)^{n/2}}
\left(\frac{k(z)}{|x-y|}\right)^{\frac{n}{2}}
\vect{\alpha} \cdot (x-y) 
K_{\frac{n}{2}}(k(z) |x-y|) 
\\
& \quad +
\frac{1}{(2\pi)^{n/2}}
\left(\frac{k(z)}{|x-y|}\right)^{\frac{n}{2}-1}
(m\alpha_0+z)
K_{\frac{n}{2}-1} (k(z) |x-y|)
\end{align*}
where for the simplicity
$k(z) = \sqrt{m^2-z^2}$. From the limiting form for the modified Bessel functions
\begin{align*}
K_{\nu}(w) 
&\sim \frac{1}{2} \Gamma(\nu) \left(\frac{w}{2}\right)^{-\nu}
&&\text{for $\Re \nu >0$ and $w \to 0$,}
\\
K_{0}(w) 
&\sim -\ln w
&&\text{for $w \to 0$,}
\\
K_\nu(w)
&\sim \sqrt{\frac{\pi}{2w}} e^{-w}
&&\text{for $z\to\infty$ in $|\arg z| \le 3\pi/2-\delta$,}
\end{align*}
we obtain that
\begin{equation*}
\n{\K(z,x)}
\le C(n,m,z)
\left\{
\begin{aligned}
& \frac{1}{|x|^{n-1}}
&\text{if $|x|\le x_0(n,m,z)$}
\\
& |x|^{-(n-1)/2} e^{-\Re k(z) |x|}
&\text{if $|x|\ge x_0(n,m,z)$}
\end{aligned}
\right.
\end{equation*}
for some positive constants $C(n,m,z)$, $x_0(n,m,z)$ depending on $z$. Hence is clear that $\K(z,x) \in L^{2q}(\R^n; \C^N)$ for $2q<n/(n-1)$ and, consequently, from equation \eqref{HSnorm} we have that $A(\D_0-z)^{-1}$ is an Hilbert-Schmidt operator if $V \in L^p(\R^n;\C^N)$ for $p>n/2$.
Since, by \eqref{YinLn}, $V \in L^n(\R^n;\C^N)$, the proof of the identity $\sigma_e(\D_V)=\sigma_e(\D_0)$ is complete.

%%%%%%%%%%

Finally, to get the absence of residual spectrum, since $\rho(\D_0)=\C\setminus\sigma_e(\D_0)$ is composed by one, or two in the massless case, connected components which intersect $\rho(\D_V)$ in a non-empty set, by Theorem XVII.2.1 in \cite{GGK} we have
$
\sigma(\D_V) \setminus \sigma_e(\D_V) = \sigma_d(\D_V).
$

%%%%%%%%%%%%%%%%%%%%%%%%%%%%%%%
%%%%%%%%%%%%%%%%%%%%%%%%%%
%%%%%%%%%%%%%%%%%%%%%%%%%%%%%%%

\section*{Acknowledgement}
The authors are members of the Gruppo Nazionale per L'Analisi Matematica, la
Probabilit\`a e le loro Applicazioni (GNAMPA) of the Istituto Nazionale di
Alta Matematica (INdAM).
The third author is partially supported by \textit{Progetti per Avvio alla Ricerca di Tipo\til1 -- Sapienza Universit\`a di Roma}.% (No.AR11916B88A0B96B).

%%%%%%%%%%%%%%%%%%%%%%%%%%%%%%%%%%%%%%%%%%%%%%%%%
%%%%%%%%%%%%%%%%%%%%% References %%%%%%%%%%%%%%%%%%%%%
%%%%%%%%%%%%%%%%%%%%%%%%%%%%%%%%%%%%%%%%%%%%%%%%%

%\bibliographystyle{plain}
%\bibliography{bibDFS}{}

%\begin{comment}

\bibliographystyle{plain}

%\end{comment}

\end{document}